\newtheorem{lem}{Lemma}[section]
\newtheorem{prop}{Proposition}[section]
\newtheorem{rem}{Remark}[section]
\newtheorem{maintheorem}{Theorem}
\date{}
\begin{document}
\title{Chebyshev's method applied to polynomials with two distinct roots}
\author[1]{Tarakanta Nayak 
    \footnote{ tnayak@iitbbs.ac.in}}
\author[1]{Pooja Phogat  \footnote{ poojaphogat174acad@gmail.com}}
\affil[1]{Department of Mathematics, 
		Indian Institute of Technology Bhubaneswar, India}
	\date{}
\maketitle
\begin{abstract}
The Julia set of the Chebyshev's method applied to polynomials with exactly two distinct roots is shown to be connected, and  its Fatou set is proved to be the union of attracting basins corresponding to the two roots. Further, if the two roots have the same multiplicity then the common boundary of the two immediate basins is proved to be a connected subset of the Julia set. 
\end{abstract}
\textit{Keywords:}
Chebyshev's method; Polynomials; Fatou set; Julia set.\\
AMS Subject Classification: 37F10, 65H05

\section{Introduction}
Finding the roots of a polynomial is a classical as well as an extensively studied problem.  
A \textit{root-finding method} is a function that assigns to each polynomial $p: \widehat{\mathbb{C}} \to \widehat{\mathbb{C}}$, a rational map $F_p$ such that every root $z_0$ of $p$ is an attracting fixed point of $F_p$, i.e., $F_p (z_0)=z_0$ and $|(F_p)'(z_0)|<1$. Here $\mathbb{C} \cup \{\infty\} $ is denoted by $ \widehat{\mathbb{C}}$. The forward orbit $\{F_p ^n (z): n \geq 1\}$ of a point $z$ \textit{sufficiently close} to a root of $p$ converges to that root of $p$.  
However, other points can behave very differently.  For some root-finding methods, there may exist polynomials and initial points whose forward orbits fail to converge or converge to a point that is \textit{not} a root of the polynomial. Although such situations are undesirable from the  root-finding perspective, these give rise to interesting dynamical phenomena.
This article is concerned with the dynamics of the Chebyshev's method applied to polynomials with two distinct roots.

\par By dynamics of an analytic function, we mean its Fatou and the Julia set.  
Given a rational function $R$ of degree at least two, the \textit{Fatou set} $\mathcal{F}(R)$ is the set of all points in $\widehat{\mathbb{C}}$ for which there exists a neighbourhood where the family of iterates $\{R^n\}_{n \geq 0}$ is equicontinuous. 
Its complement in $\widehat{\mathbb{C}}$ is the \textit{Julia set} $\mathcal{J}(R)$.  
By definition, $\mathcal{F}(R)$ is   open  while $\mathcal{J}(R)$, being its complement, is closed. A maximally connected component of $\mathcal{F}(R)$  is called a \textit{Fatou component}. Each Fatou component is either periodic or  some iterated image of it is periodic. A periodic Fatou component is an attracting domain, a parabolic domain, a Siegel disk or a Herman ring (see~\cite{Beardon_book} for further details).  
\par  For an attracting fixed point $z_0$, the set of all points  converging to $z_0$ under iteration of $R$ is called the \textit{basin of attraction} of $z_0$, i.e., the set
$\{z \in \widehat{\mathbb{C}} : \lim_{n \to \infty} R^n(z)=z_0 \}.
$
The connected component of the basin of attraction containing $z_0$ is the \textit{immediate basin} or \textit{attracting domain} and is denoted by $\mathcal{A}_{z_0}$.  
 \par
For a non-constant polynomial $p$, the \emph{Chebyshev's method} is defined by the rational map
$$
C_p(z)=z-\left(1+\frac{1}{2} L_p(z)\right) \frac{p(z)}{p^{\prime}(z)},
~\mbox{where}~
L_p(z)=\frac{p(z) p^{\prime \prime}(z)}{\left(p^{\prime}(z)\right)^2}
\text{.}$$
If $p$ is a linear polynomial, then $C_p$ is constant. The Chebyshev's method differs from the widely studied Newton method. The two significant differences are that Chebyshev's method exhibits \emph{third-order convergence} at each simple root of $p$, meaning its local degree at such roots is at least three, and that it can have a fixed point different from the roots of the polynomial. 
The dynamics of $C_p$ has been studied in \cite{Nayak-Pal2022}, where it is shown that if a cubic polynomial $p$ is unicritical (i.e., it has only one critical point) or non-generic (i.e., it has at least one multiple root), then the Julia set of $C_p $ is connected.  A \textit{critical point} of a rational map is a point where the local degree of the map is at least two. The derivative of the map vanishes at a finite critical point. A critical point $c^*$ of $p$ is said to be \textit{special} if $p(c^*)\neq 0$ but $p''(c^*)=0$. Suppose $p$ has $N$ distinct roots and $s$ distinct special critical points. Then the degree of $C_p$,  $\deg(C_p)=3N+s-B-2$, where $B$ denotes the sum of the multiplicities of all special critical points of $p$ (see Theorem~1.1, \cite{Nayak-Pal2022}). From this degree formula, it follows that if $p$ has at least three distinct roots then $\deg(C_p)\geq 6$, with equality only when $p$ is a cubic unicritical polynomial. Interestingly, there is no polynomial $p$ for which the degree of $C_p$ is five. The lowest possible  degree of $C_p$ is four, and it occurs precisely when $p$ has exactly two distinct roots. This motivates the study of the Chebyshev's method applied to polynomials with two distinct roots.
\par It has been proved by Nayak and Pal that if a polynomial is centered (i.e., with zero as the second leading coefficient) and has exactly two distinct roots with equal multiplicity, then the Julia set of $C_p$ is connected and $C_p$ is convergent (Theorem A,~\cite{Sym_dyn}), i.e., the Fatou set is the union of attracting basins corresponding to the roots of $p$. A key ingredient in their proof was that the critical points of $C_p$ (up to the Scaling property) are symmetric not only with  respect to the real line but also with respect to the origin. However, this is no longer true if the polynomial is not centered or the restriction on the multiplicity of roots is removed. In this article, we complete the dynamical study of the Chebyshev's method applied to all polynomials that are not necessarily centered  and having exactly two roots without any restriction on their multiplicities.  

\begin{maintheorem}\label{connected-convergent}
	If $p$ is a polynomial with exactly  two distinct roots,  then  the Julia set $\mathcal{J}(C_p)$   is connected and  $C_p$ is convergent, i.e., the Fatou set $\mathcal{F}(C_p)$ is the union of the basins of attractions corresponding to the roots of $p$.
	\end{maintheorem}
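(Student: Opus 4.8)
The plan is to first exploit the affine \emph{scaling property} of the Chebyshev map to normalise the problem. Writing the two roots with multiplicities $m$ and $n$, an affine conjugacy moves them to $0$ and $1$, so we may take $p(z)=z^{m}(z-1)^{n}$ and $C_p$ becomes a \emph{specific} rational map of degree $4$ depending only on the pair $(m,n)$. Two symmetries survive and I would use both: $C_p$ has real coefficients, so $\mathcal F(C_p)$ and $\mathcal J(C_p)$ are symmetric about $\widehat{\mathbb R}$, and the involution $z\mapsto 1-z$ conjugates the case $(m,n)$ to $(n,m)$, so one may assume $m\le n$. I would then record the local data at the fixed points: each root is superattracting when its multiplicity is $1$ and attracting with multiplier $\frac{(2k-1)(k-1)}{2k^{2}}\in(0,1)$ when that multiplicity is $k\ge 2$, while $\infty$ is a repelling fixed point (its multiplier is $\bigl(1-\frac{3d-1}{2d^{2}}\bigr)^{-1}>1$, $d=m+n$). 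There are exactly two further fixed points, but their type need not be analysed directly; I note only that they are typically repelling (for $m=n=1$ both have multiplier $6$), so $C_p$ may carry several weakly repelling fixed points and Shishikura's connectivity criterion is unavailable, which is why connectivity will be routed through simple connectivity of Fatou components.

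The engine is the factorisation
$$C_p'(z)=\frac{p(z)^{2}\,\bigl[\,3p''(z)^{2}-p'(z)p'''(z)\,\bigr]}{2\,p'(z)^{4}},$$
which, together with a pole analysis, locates all six critical points (with multiplicity, since $2\deg(C_p)-2=6$). Each simple root among $\{0,1\}$ is a critical point of local degree $3$ lying in its own immediate basin. The unique zero $c_0=m/(m+n)$ of $p'$ off the roots is a triple pole of $C_p$, hence a double critical point with $C_p(c_0)=\infty$; since $\infty$ is a repelling fixed point, the orbit $c_0\mapsto\infty\mapsto\infty$ shows $c_0\in\mathcal J(C_p)$. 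The remaining critical points, the \emph{free} ones, are the zeros of $3p''^{2}-p'p'''$ lying away from the roots; there are $0$, $2$ or $4$ of them according as both, one, or neither of $m,n$ equals $1$.

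With the critical points located, convergence follows once one shows that \emph{every free critical point lies in the basin of $0$ or of $1$}. Granting this, every critical orbit either converges to a root or lands on $\infty$, so no critical point is recurrent; consequently there is no attracting or parabolic cycle other than $\{0\}$ and $\{1\}$ (each such cycle must absorb a critical orbit) and no Siegel disk or Herman ring (a rotation domain requires a recurrent critical point), whence $\mathcal F(C_p)$ is exactly the union of the basins of attraction of $0$ and $1$. \textbf{This control of the free critical orbits is the main obstacle.} In the equal-multiplicity centred case of \cite{Sym_dyn} the free critical points were pinned down by a double symmetry (about $\widehat{\mathbb R}$ and about the centre); here, with neither centering nor equal multiplicities, they may be genuinely off-axis, so I would locate them using only the surviving $\widehat{\mathbb R}$- and $(z\mapsto 1-z)$-symmetries, combined with a direct study of $C_p$ near $\widehat{\mathbb R}$ and on the immediate basins, and then control their forward orbits through the attracting behaviour at the roots.

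Finally I would deduce connectivity from the equivalence ``$\mathcal J$ is connected if and only if every Fatou component is simply connected.'' First, a Riemann--Hurwitz count on the invariant immediate basins $\mathcal A_0,\mathcal A_1$, using the critical points identified above, shows they are simply connected. Next, by the convergence step every critical point lies in $\mathcal A_0\cup\mathcal A_1\cup\mathcal J(C_p)$, so any Fatou component other than $\mathcal A_0,\mathcal A_1$ contains no critical point and is therefore mapped biholomorphically onto another Fatou component; since every Fatou component is iterated onto an immediate basin in finitely many degree-one steps, simple connectivity propagates back to all components. Hence every Fatou component is simply connected and $\mathcal J(C_p)$ is connected, which together with the convergence statement completes the proof.
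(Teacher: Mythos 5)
Your overall architecture matches the paper's: normalise via the scaling property to $p(z)=z^k(z-1)^m$, factor $C_p'$ to locate the six critical points, and reduce both convergence and connectivity to controlling the four non-pole critical points. But there are two genuine gaps. The first is the one you flag yourself: you never prove that the free critical points lie in $\mathcal{A}_0\cup\mathcal{A}_1$, offering only a plan (``I would locate them using \dots the surviving symmetries \dots and then control their forward orbits''). This is precisely where all the work of the paper lies. The paper proves it by real-variable analysis: the quartic factor $F$ in the numerator of $C_p'$ has coefficients of alternating sign, so it has no zeros in $(-\infty,0)$, and $F(x+1)>1$ for $x>0$, so none in $(1,\infty)$; the two extraneous fixed points $e_1<e_2$ are shown to be real, to lie in $(0,1)$ on either side of the pole, and to be repelling; the real dynamics ($C_p(x)>x$ for $x<0$, $C_p(x)<x$ on $(0,e_1)$, monotonicity) then gives $(-\infty,e_1)\subset\mathcal{A}_0$ and $(e_2,\infty)\subset\mathcal{A}_1$; finally each immediate basin is shown to contain \emph{two} critical points counted with multiplicity, via a case analysis (a superattracting root is itself a multiple critical point; a non-real critical point comes with its conjugate by real symmetry; a real critical point $c\in(0,e_1)$ forces a second zero of $C_p'$ in $(0,e_1)$ by the intermediate value theorem, because $C_p'(0)\in(0,1)$ and $C_p'(e_1)>1$). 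Since only four critical points are available besides the pole, this accounts for all of them, and convergence then follows from Lemma~\ref{cpoint} exactly as you say. Without this step your argument establishes nothing.

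The second gap is your claim that ``a Riemann--Hurwitz count on the invariant immediate basins \dots shows they are simply connected.'' Riemann--Hurwitz constrains only finitely connected domains and cannot rule out infinite connectivity: for $z^2+c$ with $c$ outside the Mandelbrot set, the basin of $\infty$ is invariant, the restriction to it is proper of degree $2$, and it contains both critical points of the map, yet it is infinitely connected. So ``invariant basin containing exactly two critical points'' is compatible with infinite connectivity, and your first connectivity step fails as stated. The paper closes this by a topological argument instead: since $\infty$ is repelling and both basins are unbounded, each has a pole on its boundary (Lemma~\ref{Jconnected}), hence the unique pole $\tfrac{k}{k+m}$ lies on $\partial\mathcal{A}_0\cap\partial\mathcal{A}_1$; if $\mathcal{A}_0$ were not simply connected, a Jordan curve in $\mathcal{A}_0$ would surround a bounded Julia component, and by density of the prepoles some forward image of that curve, still inside the invariant $\mathcal{A}_0$, would surround the pole --- impossible, because the pole lies on the boundary of the unbounded, connected $\mathcal{A}_1$, which is disjoint from $\mathcal{A}_0$. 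Connectedness of $\mathcal{J}(C_p)$ then follows from the second half of Lemma~\ref{Jconnected}. (Your propagation of simple connectivity to all remaining Fatou components via proper unbranched maps is sound, but it rests on the two steps above.)
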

	
The Fatou and the Julia set of $C_p$ are illustrated in Figure~\ref{Julia_Fatou2}. A numerical study of several root-finding methods such as Newton, Chebyshev, Halley and Schröder (all for multiple roots),  applied to polynomials with two distinct roots is done in~\cite{CVV2024}.
	\begin{figure}[h!]
	\begin{subfigure}{.5\textwidth}
		\centering
		\includegraphics[width=0.975\linewidth]{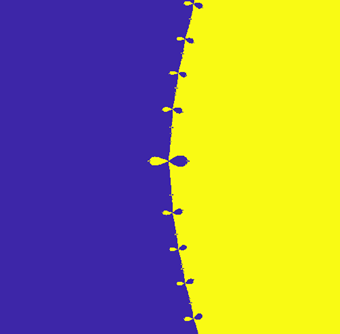}
		\caption{$k=6,m=4$}
	\end{subfigure}
	\begin{subfigure}{.5\textwidth}
		\centering
		\includegraphics[width=0.975\linewidth]{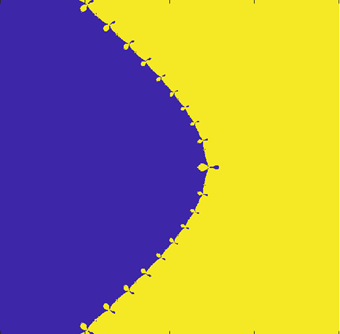}
		\caption{$k=3,m=10$}
	\end{subfigure}
	\caption{Fatou and Julia sets of $C_p$ where $p(z)=z^k (z-1)^m$. The blue and yellow regions constitute the basins of the $0$ and $1$ respectively. The largest regions (blue and yellow) are the immediate basins.}
	\label{Julia_Fatou2}
\end{figure}
 Note that for Newton's method, $\infty$ is the only extraneous fixed point, i.e., a fixed point  that is \textit{not} a root  of the polynomial, and it is repelling (see Section~\ref{prelim} for definition).  This fact along with Theorem~1 of \cite{Shishikura2009} leads to the conclusion that the Julia set of Newton's method applied to every polynomial is connected. The corresponding result is not yet established for the Chebyshev's method, though all known examples of Julia sets are connected. For   a polynomial $p$ with exactly two distinct roots, the degree of Newton's method $N_p$ is two. In this case, there are exactly two critical points of $N_p$, and they correspond to the basins of roots of $p$. The Julia set $\mathcal{J}(N_p)$ is a Jordan curve (in fact a straight line), and the Fatou set $\mathcal{F}(N_p)$ consists precisely of the two completely invariant components, namely the immediate basins of $N_p$ corresponding to the roots of $p$. This is also recently shown to be true for Halley's method  whenever the multiplicity of the two roots is the same (Theorem A, ~\cite{halley-liu-etal-2025}). However, this is not true for the Chebyshev's method. The number of Fatou components of $C_p$ can be infinite (for example, see Theorem A(1)~\cite{Sym_dyn}).

\par 
Connectedness of the Julia set implies that each Fatou component of $C_p$ is simply connected. In particular, the immediate basins corresponding to the roots of $p$ are simply connected. Moreover, for each immediate basin $\mathcal{A} $ corresponding to a root of $p$, the restriction  $C_p : \mathcal{A}  \to  \mathcal{A} $ is a proper map of degree three. This follows from the Riemann-Hurwitz formula (see Theorem 5.4.1, \cite{Beardon_book}). Since the degree of $C_p$ is four (see Equation~\ref{eq:Cp}), the immediate basin $\mathcal{A}$ is not completely invariant. Consequently, there are infinitely many Fatou components in addition to the two immediate basins.
 In particular, the Julia set of $C_p$ is not the common boundary of these two immediate basins. We investigate this common boundary in the case where the two roots of $p$ have equal multiplicity and establish the following.  

\begin{maintheorem}
	 	If $p$ is a polynomial with exactly two distinct roots  and the  multiplicities of these two roots are equal, then  the common boundary of the two immediate basins corresponding to the roots of $p$ is a connected subset of the Julia set of $C_p$.
 \label{equal-multiplicity}
\end{maintheorem}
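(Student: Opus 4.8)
The plan is to exploit the extra symmetry produced by the equality of the multiplicities and to show that the entire common boundary lies on a single invariant circle contained in the Julia set, namely the image of the perpendicular bisector of the two roots. By the scaling property, an affine conjugacy changes neither the topology of the Fatou and Julia sets nor the common boundary in question, so I may normalize the two roots to $\tfrac12$ and $-\tfrac12$ and take $p(z)=(z^2-\tfrac14)^k$. Then $p$ is even, and a direct computation shows that $C_p$ is odd, $C_p(-z)=-C_p(z)$, and has real coefficients, $\overline{C_p(z)}=C_p(\bar z)$. Consequently $C_p$ commutes with the reflection $\tau(z)=-\bar z$ about the imaginary axis and with complex conjugation. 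Writing $z=iy$, one checks that $C_p(iy)\in i\mathbb{R}$, $C_p(0)=\infty$ and $C_p(\infty)=\infty$, so the extended imaginary axis $\Lambda=i\mathbb{R}\cup\{\infty\}$ is forward invariant, while $\tau$ interchanges the immediate basins $\mathcal{A}_{1/2}$ and $\mathcal{A}_{-1/2}$ and fixes $\Lambda$ pointwise.

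Next I would show $\Lambda\subseteq\mathcal{J}(C_p)$, which is immediate from Theorem~\ref{connected-convergent}: since $C_p$ is convergent, every Fatou point is attracted to a root, and neither root lies on $\Lambda$; but $\Lambda$ is forward invariant and closed, so the orbit of a point of $\Lambda$ remains in $\Lambda$ and cannot converge to $\pm\tfrac12$. Hence $\Lambda\cap\mathcal{F}(C_p)=\emptyset$. Because $\mathcal{A}_{1/2}$ is a connected subset of the Fatou set disjoint from $\Lambda$ and contains $\tfrac12$, it lies in the open right half-plane $H^+=\{\mathrm{Re}\,z>0\}$, and $\mathcal{A}_{-1/2}=\tau(\mathcal{A}_{1/2})\subseteq H^-$; thus the two closures meet only along $\Lambda$. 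Using that $\tau$ fixes $\Lambda$ pointwise, one identifies the common boundary as $K:=\overline{\mathcal{A}_{1/2}}\cap\Lambda=\partial\mathcal{A}_{1/2}\cap\partial\mathcal{A}_{-1/2}\subseteq\Lambda$. To anchor $K$ I would analyse $C_p$ on the real axis: for $x>\tfrac12$ one has $p,p',p''>0$, so the extraneous-fixed-point equation $L_p=-2$ has no solution there; together with the asymptotics $C_p(x)\sim\mu x$ with $0<\mu<1$, a standard one-dimensional analysis gives $(\tfrac12,\infty)\subseteq\mathcal{A}_{1/2}$, whence $\infty\in K$ (and symmetrically via the negative ray).

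The genuinely hard step is the connectedness of $K$. Since $K$ is a closed subset of the circle $\Lambda$, it is connected if and only if $\Lambda\setminus K$ is connected, so it suffices to show that $\Lambda\setminus K$ is a single arc (or empty). Here I would use two structural facts: $K$ is symmetric under $y\mapsto-y$ (because $\mathcal{A}_{1/2}$ is symmetric about $\mathbb{R}$), and $K$ is forward invariant, $C_p(K)\subseteq K$ (because $C_p(\overline{\mathcal{A}_{1/2}})\subseteq\overline{\mathcal{A}_{1/2}}$ and $C_p(\Lambda)\subseteq\Lambda$), so that $\Lambda\setminus K$ is open and backward invariant within $\Lambda$. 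Combined with $\infty\in K$, the $y\mapsto-y$ symmetry forces the components of $\Lambda\setminus K$ to come in pairs unless a component straddles the midpoint $0$; hence connectedness of $\Lambda\setminus K$ reduces to the \emph{monotonicity} statement that $\{\,y>0:iy\in\overline{\mathcal{A}_{1/2}}\,\}$ is an interval of the form $[c,\infty)$, i.e. the only possible gap is the symmetric arc $\{iy:|y|<c\}$ about $0$ (with $c=0$ meaning $K=\Lambda$).

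I expect this monotonicity to be the main obstacle. I would establish it by studying the explicit induced one-dimensional map $y\mapsto\phi(y)$ on $\Lambda$ together with the location of the Fatou components that can screen $\Lambda$ from $\mathcal{A}_{\pm1/2}$: by the degree count $\deg(C_p)=4$ against the degree-three proper self-map on each immediate basin, each of $\mathcal{A}_{1/2}$ and $\mathcal{A}_{-1/2}$ has a unique further (degree one) preimage component, and the plan is to show that these screening components and all their forward-symmetric descendants accumulate only in a single symmetric neighbourhood of $0$, so that any gap in $K$ is forced to contain $0$. An alternative route, available should $\partial\mathcal{A}_{1/2}$ be shown to be locally connected, is to pass through Carath\'eodory's theorem and the degree-three Blaschke model of $C_p|_{\mathcal{A}_{1/2}}$ and read off the access combinatorics on $\Lambda$; but the direct no-two-gaps argument built on the invariant line $\Lambda$ appears more robust, since it avoids proving local connectivity of the boundary.
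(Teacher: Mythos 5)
Your setup is sound and, up to the affine change of coordinates placing the roots at $\pm\tfrac12$ instead of $0,1$, it coincides with the paper's: the symmetry line $\Lambda$ is forward invariant, it lies in the Julia set because $C_p$ is convergent (Theorem~\ref{connected-convergent}) and no point of an invariant closed set avoiding both roots can be attracted to either of them, and consequently the common boundary $K$ is a closed subset of $\Lambda$ anchored at $\infty$. Up to this point you have reproduced the easier half of the argument. The problem is that the theorem is precisely the statement you then defer: connectedness of $K$. You reduce it to the monotonicity claim that $\{y>0: iy\in\overline{\mathcal{A}_{1/2}}\}$ is an interval $[c,\infty)$, and then you only describe two possible strategies (control of the ``screening'' preimage components of the immediate basins, or local connectivity plus Carath\'eodory), neither of which is carried out; you yourself flag this step as the expected obstacle. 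A reduction of the theorem to an unproved claim of comparable depth is a gap, not a proof.

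Two concrete remarks on why this step is the whole difficulty and how the paper closes it. First, you overlooked a cheap but decisive fact: by Lemma~\ref{Jconnected}, each unbounded invariant immediate basin must have a pole of $C_p$ on its boundary, and $C_p$ has only one pole (the midpoint of the roots, which is $0$ in your normalization); hence the pole itself lies in $K$, so the one gap your monotonicity statement tolerates --- a symmetric arc straddling $0$ --- cannot occur, and the true assertion is $K=\Lambda$, i.e.\ $c=0$. Second, the paper proves $K=\Lambda$ by a mechanism absent from your sketch: (i) backward invariance of $K$ along $\Lambda$ (Lemma~\ref{common-boundary-backwardinvariant}): of the four $C_p$-preimages of a point $w\in K$, three lie on the boundary of each immediate basin (since the restriction to each immediate basin is proper of degree three), which forces the two preimages lying on $\Lambda$ to belong to both boundaries, i.e.\ to $K$; starting from the pole, all iterated preimages of the pole on $\Lambda$ therefore lie in $K$; (ii) the one-dimensional dynamics of the induced map $\phi$ on $\Lambda$ (Lemma~\ref{phi}): every orbit on $\Lambda$ either hits the pole or re-enters the fundamental interval $(0,\zeta)$ infinitely often; (iii) the inverse-branch contraction of Lemma~\ref{backward-contraction}, which shows that no subinterval of $\Lambda$ can have all its forward images trapped in that fundamental interval, so every subinterval of $\Lambda$ contains an iterated preimage of the pole. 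Hence the pole's preimages on $\Lambda$ are dense in $\Lambda$, they all lie in the closed set $K$, and therefore $K=\Lambda$, which is connected. Your proposal would need an argument of this strength (or an actual proof of your monotonicity claim) to be complete; as written, it stops exactly where the theorem begins.
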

\begin{figure}[h!]
	\begin{subfigure}{.5\textwidth}
		\centering
		\includegraphics[width=1.1 \linewidth]{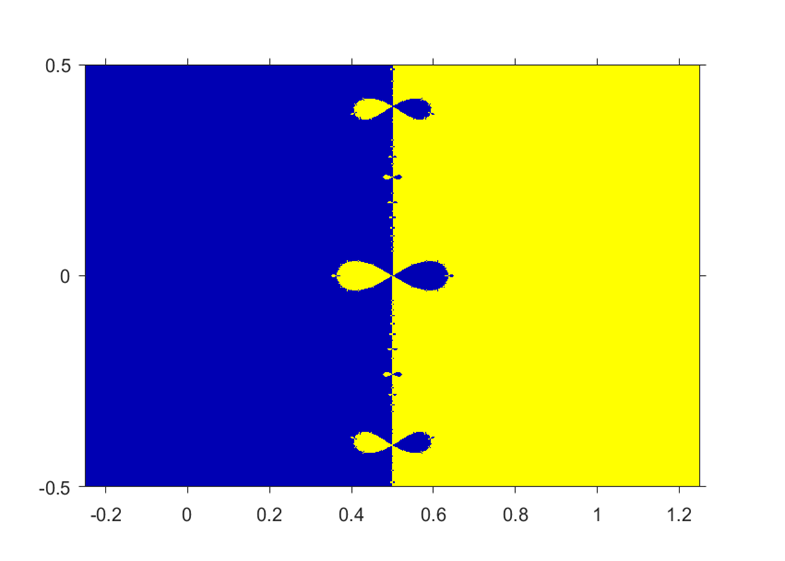}
		\caption{$m=2$}
	\end{subfigure}
	\begin{subfigure}{.5\textwidth}
		\centering
		\includegraphics[width=1.1 \linewidth]{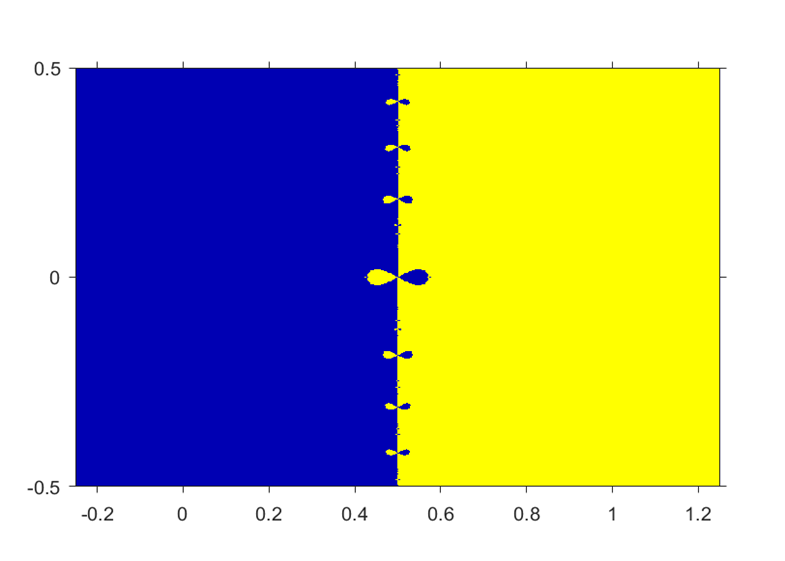}
		\caption{$m=7$}
	\end{subfigure}
	\vspace{0.5cm}
	\begin{subfigure}{.5\textwidth}
		\centering
		\includegraphics[width=1.1 \linewidth]{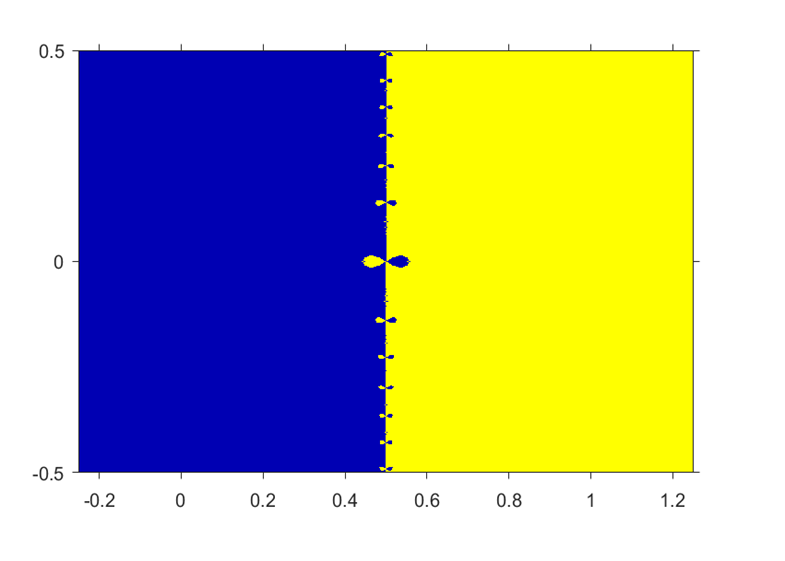}
		\caption{$m=12$}
	\end{subfigure}
	\begin{subfigure}{.5\textwidth}
		\centering
		\includegraphics[width=1.1 \linewidth]{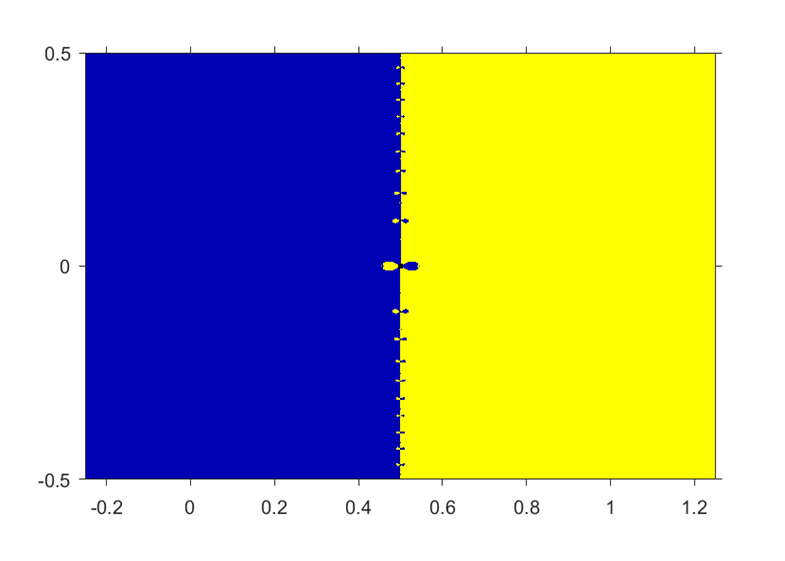}
		\caption{$m=20$}
	\end{subfigure}
	\caption{Fatou and Julia sets of $C_p$ for $p(z)=(z^2-z)^m$.}
	\label{k=m-dynamics}
\end{figure} 
 For the polynomial $p(z)=(z^2 -z)^m$, the immediate basins of $0$ and $1$ are represented by the largest  blue and yellow regions, respectively (see Figure~\ref{k=m-dynamics}). The common boundary of these regions in each case, is the vertical line passing through the pole $0.5$, which is clearly connected.
\par

This article is organised as follows. Section~\ref{prelim} contains all the preliminary results required later. The proofs of the results are provided in Section~\ref{proofs}.

\section{Preliminary results}
\label{prelim}
Two rational maps $R$ and $S$ are said to be \emph{conformally conjugate} (or \emph{conjugate}) if there exists a Möbius transformation $\phi$ such that
 $ S = \phi \circ R \circ \phi^{-1}, $ 
where $\circ$ denotes function composition. Since
 $ S^n = \phi \circ R^n \circ \phi^{-1} \quad \text{for all } n,$ 
the iterative behaviours of $R$ and $S$ are essentially the same. More precisely, we have the following result.

\begin{lem}[Theorem 3.1.4, \cite{Beardon_book}]
	If $S$ and $R$ are rational maps with $S = \phi \circ R \circ \phi^{-1}$ for some Möbius map $\phi$, then
$\mathcal{J}(S) = \phi(\mathcal{J}(R)).$
\label{conjugacy}
\end{lem}
The Chebyshev's method applied to different polynomials can give rise to the same rational map up to conjugacy. This is made precise by the \emph{Scaling property}.

\begin{lem}[Scaling property, Theorem 2.2, \cite{Nayak-Pal2022}]\label{scaling}
	Let $p$ be a polynomial of degree at least two. If $T(z) = \alpha z + \beta$ with $\alpha, \beta \in \mathbb{C}, \alpha \neq 0$, and $g = \lambda p \circ T$ for $\lambda \neq 0$, then 
$T \circ C_g \circ T^{-1} = C_p.
$
\end{lem}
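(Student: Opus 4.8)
The plan is to verify the identity by direct computation, using the chain rule to express the first and second derivatives of $g$ in terms of those of $p$ evaluated at $T(z)$, and then tracking how the constants $\lambda$ and $\alpha$ propagate through the defining formula for $C_g$. Since $C_p$ is built from $p$, $p'$, $p''$ in a homogeneous way, I expect the scaling factor $\lambda$ to cancel completely and the affine factor $\alpha$ to cancel after $T$ is applied.

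First I would record the derivatives of $g = \lambda\,(p \circ T)$. Because $T(z) = \alpha z + \beta$ is affine, the chain rule gives $g'(z) = \lambda \alpha\, p'(T(z))$ and $g''(z) = \lambda \alpha^2\, p''(T(z))$. The crucial observation is that the auxiliary quantity $L_g$ is left unchanged up to the substitution $z \mapsto T(z)$: forming the ratio $L_g(z) = g(z)g''(z)/(g'(z))^2$, the factor $\lambda^2 \alpha^2$ appears in both numerator and denominator and cancels, yielding $L_g(z) = L_p(T(z))$. This reflects the fact that $L_p$ is an affine invariant of the scheme.

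Next I would substitute into the definition of $C_g$. The Newton term contracts to $g(z)/g'(z) = \alpha^{-1}\, p(T(z))/p'(T(z))$, so that
$$C_g(z) = z - \alpha^{-1}\left(1 + \tfrac{1}{2} L_p(T(z))\right)\frac{p(T(z))}{p'(T(z))}.$$
Applying $T$ on the left multiplies the whole expression by $\alpha$ and adds $\beta$; the factor $\alpha$ exactly cancels the $\alpha^{-1}$ in the correction term, while $\alpha z + \beta$ becomes $T(z)$. Hence $T(C_g(z)) = T(z) - \left(1 + \tfrac{1}{2} L_p(T(z))\right) p(T(z))/p'(T(z)) = C_p(T(z))$, which is precisely $T \circ C_g = C_p \circ T$, equivalently $T \circ C_g \circ T^{-1} = C_p$.

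There is no substantive obstacle here: the entire content of the statement is the cancellation of the scaling constants. The only point requiring care is bookkeeping --- ensuring that the single factor $\alpha$ produced by differentiating the inner affine map (and its square in $g''$) combines correctly so that $L_g$ is \emph{fully} invariant while $g/g'$ retains exactly one factor $\alpha^{-1}$, which is then absorbed when $T$ is applied. The hypothesis $\deg p \ge 2$ guarantees that $C_g$ and $C_p$ are genuine (non-constant) rational maps, so that the conjugacy statement is meaningful and Lemma~\ref{conjugacy} can subsequently be invoked to relate their Julia sets.
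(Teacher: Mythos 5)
Your computation is correct: the chain-rule identities $g'(z)=\lambda\alpha\,p'(T(z))$ and $g''(z)=\lambda\alpha^{2}\,p''(T(z))$ do give $L_g=L_p\circ T$ and $g/g'=\alpha^{-1}\,(p/p')\circ T$, after which applying $T$ absorbs the $\alpha^{-1}$ and yields $T\circ C_g=C_p\circ T$. This is essentially the same argument as the source: the paper states the lemma without proof, citing Theorem~2.2 of \cite{Nayak-Pal2022}, where it is proved by exactly this direct bookkeeping of $\lambda$ and $\alpha$.
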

Taking $T(z)=(b-a)z+a$ where $a,b$ are the only roots of $p$, and $\lambda$ as the reciprocal of the leading coefficient of $p \circ T$, a useful consequence of the Scaling property  follows. 
\begin{prop}If $p$ is a polynomial with exactly two distinct roots then $C_p$ is conformally conjugate to $C_q$ where $q(z)=z^k (z-1)^m$ for some natural numbers $k,m$.
\label{scaling-tworoots}	\end{prop}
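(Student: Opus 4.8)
The plan is to produce the required conjugacy directly from the Scaling property (Lemma~\ref{scaling}), using the affine map suggested just before the statement. Since $p$ has exactly two distinct roots $a \neq b$, I would begin by writing $p(z) = c\,(z-a)^k (z-b)^m$ for some nonzero constant $c$ and positive integers $k,m$ equal to the multiplicities of $a$ and $b$ respectively. In particular $\deg p = k+m \geq 2$, so the degree hypothesis of Lemma~\ref{scaling} is satisfied and the Scaling property is available.

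First I would set $T(z) = (b-a)z + a$. This is an affine map with $b-a \neq 0$, hence a Möbius transformation (it fixes $\infty$), and it satisfies $T(0)=a$ and $T(1)=b$. Next I would compute the composition $p \circ T$. Substituting $T(z)-a = (b-a)z$ and $T(z)-b = (b-a)(z-1)$ gives
\[
(p \circ T)(z) = c\,(b-a)^{k+m}\, z^k (z-1)^m,
\]
so the leading coefficient of $p \circ T$ equals $c\,(b-a)^{k+m}$.

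Then I would take $\lambda$ to be the reciprocal of this leading coefficient, namely $\lambda = \big(c\,(b-a)^{k+m}\big)^{-1}$, and set $g = \lambda\,(p \circ T)$. With this choice one has $g(z) = z^k (z-1)^m = q(z)$. Applying the Scaling property with these $T$ and $\lambda$ then yields $T \circ C_q \circ T^{-1} = C_p$, which is precisely the assertion that $C_p$ and $C_q$ are conformally conjugate via the Möbius map $T$, with $k,m$ the multiplicities of the two roots.

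I do not expect any substantive obstacle here: the argument is a direct normalization rather than a genuine difficulty. The only points that require a little care are the bookkeeping in the computation of $p \circ T$ (in particular tracking the factor $(b-a)^{k+m}$ correctly), confirming that $T$ extends to a Möbius transformation, and verifying the degree hypothesis $\deg p \geq 2$ needed to invoke Lemma~\ref{scaling} — all of which are immediate.
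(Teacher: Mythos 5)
Your proposal is correct and is essentially identical to the paper's own (one-line) argument: the paper also takes $T(z)=(b-a)z+a$ with $a,b$ the two roots and $\lambda$ the reciprocal of the leading coefficient of $p\circ T$, then invokes the Scaling property (Lemma~\ref{scaling}) to conclude $T\circ C_q\circ T^{-1}=C_p$. Your write-up just makes explicit the computation $(p\circ T)(z)=c\,(b-a)^{k+m}z^k(z-1)^m$ that the paper leaves implicit.
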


%
%
Let $p$ be a polynomial with exactly two distinct roots with multilplicities $k$ and $m$.
It can be observed from Lemma~\ref{conjugacy} and Proposition~\ref{scaling-tworoots} that the Julia set of $C_p$ is connected and the Fatou set of $C_p$ is the union of the two attracting basins corresponding to the roots of $p$ if and only if it is so for $C_q$ where $q(z)=z^k (z-1)^m$. For proving Theorems~\ref{connected-convergent} and ~\ref{equal-multiplicity}, we assume  without loss of generality, that 
\begin{equation}
	p(z) =z^k(z-1)^m ~\mbox{for}~k,m \geq 1.
\label{generalform-polynomial}	
\end{equation}  Then  $p'(z)=z^{k-1}(z-1)^{m-1}((k+m)z-k), p''(z)  =z^{k-2}(z-1)^{m-2}f(z)~\mbox{and}~ p'''(z) =z^{k-3}(z-1)^{m-3}g(z)$, where  $f(z)=(k+m)(k+m-1)z^2-2k(k+m-1)z+k(k-1) $ and $g(z)=(k+m)(k+m-1)(k+m-2)z^3-3k(k+m-1)(k+m-2)z^2+3k(k-1)(k+m-2)z-k(k-1)(k-2)$. 
Note that  these expressions remain valid even for $k,m \leq 2$. Consequently, 
$	L_p(z)   =\frac{f(z)}{\left(kz+m z-k\right)^2},$
and \begin{equation}
	C_p(z)   = z-z(z-1)\left(\frac{(k+m)\left(3 k+3m -1\right) z^2
		-2k\left(3 k+3m -1\right) z+3k^2-k}
	{2\left(kz+mz-k\right)^3}\right)  \label{eq:Cp}.
\end{equation}
We can also write 
\begin{equation}
C_p (z) =\frac{A_0 z^4+A_1 z^3+A_2 z^2+A_3 z}
{2\left((k+m)z-k\right)^3}, \label{eq:Cp2} \end{equation}
where  $A_0=(k+m)(k+m-1)(2(k+m)-1)$, $A_1=(3-6k)(k+m)^2+(6k-1)(k+m)-2k $, $A_2= 3k(k-1)(2(k+m)-1)$, and $A_3= -k(k-1)(2k-1)$. An immediate consequence of Equation~(\ref{eq:Cp}) is the following.
\begin{lem} 
  If $c$ is a non-real critical point of $C_p$, then $\overline{c}$ is also a critical point of $C_p$. Moreover, every Fatou component of $C_p$ containing a real number is symmetric with respect to the real line.
 \label{symmetry}
\end{lem}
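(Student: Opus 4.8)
The plan is to exploit the observation that, because $k$ and $m$ are natural numbers, every coefficient $A_0,A_1,A_2,A_3$ appearing in Equation~(\ref{eq:Cp2}), together with the coefficients of the denominator $2((k+m)z-k)^3$, is real (indeed an integer). Hence $C_p$ is a rational map with \emph{real coefficients}, and so it commutes with complex conjugation: writing $\sigma(z)=\overline{z}$, we have $C_p(\overline{z})=\overline{C_p(z)}$ for every $z\in\widehat{\mathbb{C}}$, that is, $\sigma\circ C_p=C_p\circ\sigma$. Everything in the lemma follows from this single structural fact.

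For the first assertion I would simply differentiate. Since $C_p$ has real coefficients, so does its derivative $C_p'$, whence $C_p'(\overline{z})=\overline{C_p'(z)}$. A non-real critical point $c$ is necessarily finite and is not a pole of $C_p$ (the only pole is the real number $k/(k+m)$), so $c$ is critical exactly when $C_p'(c)=0$; this then forces $C_p'(\overline{c})=\overline{C_p'(c)}=0$, so $\overline{c}$ is also a critical point. Equivalently, and more robustly, one may note that $\sigma$ is a homeomorphism of $\widehat{\mathbb{C}}$ with $\sigma^{-1}=\sigma$ and $\sigma\circ C_p\circ\sigma^{-1}=C_p$; since a homeomorphic conjugacy preserves local degree, the local degree of $C_p$ at $\sigma(c)=\overline{c}$ equals that at $c$, so $c$ is critical if and only if $\overline{c}$ is.

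For the second assertion the key point is that $\sigma$ preserves the Fatou set. Because $\sigma$ is an isometry of the spherical metric satisfying $C_p^{\,n}\circ\sigma=\sigma\circ C_p^{\,n}$ for all $n$, the family $\{C_p^{\,n}\}$ is equicontinuous on a neighbourhood of a point $z$ precisely when it is equicontinuous on a neighbourhood of $\overline{z}$; hence $\sigma(\mathcal{F}(C_p))=\mathcal{F}(C_p)$, and as $\sigma$ is a homeomorphism it permutes the Fatou components. Now let $U$ be a Fatou component containing a real number $x_0$. Then $\sigma(U)$ is again a Fatou component, and since $\sigma(x_0)=x_0\in\sigma(U)$, both $U$ and $\sigma(U)$ are Fatou components containing $x_0$. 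As distinct Fatou components are disjoint, I conclude $U=\sigma(U)$, which is exactly the statement that $U$ is symmetric with respect to the real line.

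I expect no serious obstacle here: the lemma is a direct consequence of the real-coefficient structure of $C_p$ recorded in Equation~(\ref{eq:Cp2}). The only step requiring a little care is justifying that conjugation preserves the Fatou/Julia partition, so that reflections of Fatou components are again Fatou components; this holds because $\sigma$ is a spherical isometry commuting with every iterate of $C_p$, which keeps the equicontinuity property invariant under reflection.
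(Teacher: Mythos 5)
Your proof is correct and takes essentially the same route as the paper's: both exploit that $C_p$ has real coefficients, so that $C_p^n(\overline{z})=\overline{C_p^n(z)}$, then obtain conjugate pairs of critical points by differentiation and the symmetry of Fatou components from the conjugation-invariance of the Fatou set. Your write-up is in fact somewhat more careful than the paper's terse argument, e.g.\ in noting that a non-real critical point cannot be the (real) pole $k/(k+m)$, so the vanishing-derivative criterion genuinely characterises criticality there, and in justifying that complex conjugation, being a spherical isometry commuting with all iterates, preserves equicontinuity and hence permutes Fatou components.
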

\begin{proof}
	For every $n \geq 1$ and $z\in \mathbb{C}$,  $C_p^n(\bar{z})=\overline{C_p^n(z)}$. Taking $n=1$ and differentiating it, yields $\overline{C_p '(\overline{z})}=C_p '(z)$, and hence $C_p '(c)=0$ if and only if  $C_p '(\overline{c})=0$. It also follows that the Fatou set is symmetric with respect to the real line. In particular, every Fatou component containing a real number is symmetric with respect to the real line.
\end{proof}

For stating some useful information about the fixed points of the Chebyshev's method, we recall relevant definitions.

A point $z_0 \in \widehat{\mathbb{C}}$ is a \textit{fixed point} of a rational map $R$ if $R(z_0)=z_0$.  
The \textit{multiplier} $\lambda_{z_0}$ of a fixed point $z_0$ is defined as $R'(z_0)$ if $z_0$ is finite and as $S'(0)$ if $z_0=\infty$, where $S(z)=\tfrac{1}{R(1/z)}$. A fixed point $z_0$ is called attracting, neutral or repelling if $|\lambda_{z_0}|<1, =1$ or $>1$, respectively.
An attracting fixed point is called \textit{super-attracting}  if its multiplier is $0$. 
	\begin{lem}[Proposition 2.3, \cite{Nayak-Pal2022}]
		If $a$ is a root of a polynomial $p$ with multiplicity $k$, then it is a fixed point of $C_p$ with multiplier $\frac{(k-1)(2k-1)}{2k^2}$. Moreover, the point at $\infty$ is also a fixed point of $C_p$ and its multiplier is $\frac{2d^2}{2d^2 -3d+1}$, where $d$ is the degree of $p$.
		\label{multiplier-fixedpoints}
	\end{lem}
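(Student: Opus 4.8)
The plan is to establish both assertions by elementary local expansions of $p$, $p'$, and $p''$, handling the finite root by direct differentiation and the point at infinity by conjugating with $z\mapsto 1/z$. For the finite root $a$ of multiplicity $k$, I would first write $p(z)=(z-a)^k h(z)$ with $h(a)\neq 0$ and introduce the Newton quotient $u(z)=p(z)/p'(z)$, so that $C_p=\mathrm{id}-\bigl(1+\tfrac12 L_p\bigr)u$. Factoring gives $p'(z)=(z-a)^{k-1}\bigl(kh(z)+(z-a)h'(z)\bigr)$, whence $u(z)=(z-a)h(z)/\bigl(kh(z)+(z-a)h'(z)\bigr)$. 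In particular $u(a)=0$, which simultaneously shows that $a$ is fixed (provided $L_p$ stays finite at $a$, so that $C_p$ is analytic there) and, via the product rule, that $u'(a)=1/k$.

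Next I would compute $L_p(a)$. Expanding $p''(z)=(z-a)^{k-2}\bigl(k(k-1)h(a)+O(z-a)\bigr)$ to leading order and substituting into $L_p=pp''/(p')^2$, the common factors $(z-a)^{2k-2}$ cancel, leaving $L_p(a)=k(k-1)h(a)^2/\bigl(kh(a)\bigr)^2=(k-1)/k$. Differentiating $C_p$ and using $u(a)=0$ to kill the term carrying $L_p'$, I obtain
\[
C_p'(a)=1-\Bigl(1+\tfrac12 L_p(a)\Bigr)u'(a)=1-\frac{1}{k}\Bigl(1+\frac{k-1}{2k}\Bigr)=\frac{2k^2-3k+1}{2k^2}=\frac{(k-1)(2k-1)}{2k^2}.
\]
The observation $u(a)=0$ is the crucial simplification, since it is exactly what lets me avoid computing $L_p'(a)$; the rest is bookkeeping in the local expansion.

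For the point at infinity I would pass to $w=1/z$ and study $S(w)=1/C_p(1/w)$ near $w=0$, equivalently the asymptotics of $C_p(z)$ as $z\to\infty$. With $p(z)=cz^d+\cdots$ one gets $u(z)=p/p'=z/d+O(1)$ and $L_p(z)\to(d-1)/d$, so $C_p(z)=\tfrac{2d^2-3d+1}{2d^2}\,z+O(1)$; since the leading coefficient $(d-1)(2d-1)/(2d^2)$ is nonzero for $d\ge 2$, infinity is fixed, and $S(w)=\tfrac{2d^2}{2d^2-3d+1}\,w+O(w^2)$ yields the claimed multiplier $S'(0)=\frac{2d^2}{2d^2-3d+1}$. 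I do not expect any serious obstacle here: the computations are routine, and the only genuine care-points are verifying that $L_p$ remains finite at $a$ (so the differentiation is legitimate) and controlling the $O(1)$ remainder at infinity so that it does not perturb the linear coefficient of $S$ at the origin.
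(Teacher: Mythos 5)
Your proof is correct. Note, however, that the paper does not prove this lemma at all: it is imported verbatim as Proposition~2.3 of \cite{Nayak-Pal2022}, so there is no internal argument to compare against; your write-up supplies a proof that the paper itself omits. Your route --- factoring $p(z)=(z-a)^k h(z)$, cancelling the common powers of $(z-a)$ so that $u=p/p'$ and $L_p$ are seen to be analytic at $a$ with $u(a)=0$, $u'(a)=1/k$, $L_p(a)=(k-1)/k$, and then reading the multiplier at $\infty$ off the asymptotics $C_p(z)=\frac{(d-1)(2d-1)}{2d^2}\,z+O(1)$ together with the inversion $S(w)=1/C_p(1/w)$ --- is the standard direct verification, and all the algebra checks out: $1-\frac{1}{k}\bigl(1+\frac{k-1}{2k}\bigr)=\frac{(k-1)(2k-1)}{2k^2}$, and the multiplier at $\infty$ is indeed the reciprocal of the linear coefficient, giving $\frac{2d^2}{2d^2-3d+1}$. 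The one point worth stating explicitly rather than as a ``care-point'' is that your local expansion shows $L_p$ is analytic in a \emph{neighbourhood} of $a$ (its denominator $q(z)^2$ with $q(z)=kh(z)+(z-a)h'(z)$ satisfies $q(a)=kh(a)\neq 0$), not merely finite at the point; this is exactly what makes $L_p'(a)$ finite and so legitimises dropping the term $\tfrac12 L_p'(a)u(a)$ when differentiating. With that observation folded in, the argument is complete.
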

We need the following two well-known results in complex dynamics.  

\begin{lem} (\cite{Beardon_book}) \label{cpoint}
  Let $U$ be a periodic Fatou component of a rational map $R$.  
 If $U$ is an immediate attracting or parabolic basin, then $U$ contains at least one critical point of $R$. If $U$ is a Siegel disk or a Herman ring, then the boundary of $U$ is contained in the closure of the post-critical set   $\{R^n (c): R'(c) =0 ~\mbox{and}~ n \geq 0\}$.
\end{lem}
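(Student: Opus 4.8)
The plan is to prove the two assertions separately: the attracting/parabolic case by linearising near the periodic point and trying to extend the linearising coordinate over the whole component, and the Siegel disk/Herman ring case by an analytic-continuation argument across a hypothetical boundary point outside the post-critical set.

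First I would treat the attracting or parabolic basin. Replacing $R$ by the iterate $R^p$, where $p$ is the period of $U$, I may assume each component of the cycle is invariant; a critical point of $R^p$ in the immediate basin corresponds to a critical point of $R$ somewhere on the cycle of $U$, so it suffices to treat a fixed component $U$. If $U$ is the immediate basin of a super-attracting fixed point $z_0$, then $R'(z_0)=0$ exhibits $z_0$ itself as a critical point in $U$, and we are done. If $z_0$ is attracting with multiplier $\lambda$, $0<|\lambda|<1$, I take the Koenigs coordinate $\phi$ near $z_0$, normalised by $\phi(z_0)=0$, $\phi'(z_0)=1$ and $\phi\circ R=\lambda\,\phi$, and work with its local inverse $\psi=\phi^{-1}$, which satisfies $R(\psi(w))=\psi(\lambda w)$ on a small disc $\{|w|<r\}$. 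The idea is to enlarge the domain of $\psi$ by the factor $1/|\lambda|>1$ at each step: for $r\le|w|<r/|\lambda|$ one defines $\psi(w)$ to be the branch of $R^{-1}$ applied to $\psi(\lambda w)$ that continues the previous definition. Assuming, for contradiction, that $U$ contains no critical point, each such inverse branch continues holomorphically inside the invariant component $U$, so $\psi$ extends to a non-constant holomorphic map $\psi:\mathbb{C}\to U$ (non-constant since $\psi'(0)\neq0$). But $U\subseteq\mathcal{F}(R)$ omits the Julia set, which is infinite; a non-constant holomorphic map from $\mathbb{C}$ into a domain omitting at least three points of $\widehat{\mathbb{C}}$ is impossible by Picard's theorem. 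Hence $U$ contains a critical point. The parabolic case runs identically, with $\phi$ replaced by the Fatou coordinate conjugating $R$ to $w\mapsto w+1$ on an attracting petal; again the absence of critical points lets one continue the coordinate over the whole basin and realise it as $\mathbb{C}$, contradicting its hyperbolicity.

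Second I would handle a Siegel disk or Herman ring $U$ (for $R^p$, again reducing to the fixed case), arguing by contradiction from the standard fact $\partial U\subseteq\mathcal{J}(R)$. Suppose some $\zeta\in\partial U$ does not lie in the closure $\overline{P}$ of the post-critical set $P=\{R^n(c):R'(c)=0,\ n\ge0\}$, and choose a disc $N$ about $\zeta$ with $\overline{N}\cap\overline{P}=\emptyset$. Since $N$ contains no post-critical point, $N$ meets no critical value of any iterate $R^n$; as $N$ is simply connected, every branch of $R^{-n}$ continues single-valuedly and holomorphically on $N$. Because $R^p|_U$ is a conformal rotation, each $R^n|_U$ is injective and a hyperbolic isometry of $U$, so the inverse branches carrying $U\cap N$ back into $U$ form an equicontinuous, hence normal, family there; continuing these branches across $N$ and passing to a locally uniform limit, I would extend the linearising (rotation) coordinate of $U$ to a neighbourhood of $\zeta$. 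This places $\zeta$ in the interior of the rotation domain, so $\zeta\in\mathcal{F}(R)$, contradicting $\zeta\in\partial U\subseteq\mathcal{J}(R)$; therefore $\partial U\subseteq\overline{P}$.

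I expect the genuine difficulty to lie in the second part: making rigorous the continuation of the linearising coordinate across a non-post-critical boundary point, in particular justifying normality on all of $N$ (not merely on $U\cap N$) and handling the annular topology of a Herman ring, where the rotation domain is not simply connected and one must track both boundary circles and select the correct inverse branches. The extension step in the attracting case is the other delicate point, since one must verify that the continuation of $\psi$ is single-valued and genuinely stays inside $U$; both of these, however, follow from the simple connectivity of $\mathbb{C}$ together with the forward invariance $R(U)=U$ and the assumed absence of critical points, which makes $R|_U$ an unbranched covering.
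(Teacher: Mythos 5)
The paper offers no proof of Lemma~\ref{cpoint} --- it is quoted from Beardon's book --- so your attempt can only be measured against the classical textbook argument, and your first half reconstructs that argument essentially completely. The reduction to an invariant component, the super-attracting case, the stepwise extension of the inverse Koenigs coordinate via $R(\psi(w))=\psi(\lambda w)$ (resp.\ the inverse Fatou coordinate via $\psi(w+1)$), the observation that absence of critical points in the invariant component $U$ makes $R|_U$ a proper unbranched covering so that continuation exists along every path and stays in $U$, monodromy on the simply connected plane for single-valuedness, and the Picard contradiction against the hyperbolicity of $U$ (which omits the infinite Julia set) --- all of this is the standard proof and is correct. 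One point in your favour: for period $p>1$ the honest conclusion is that the \emph{cycle} of $U$ contains a critical point of $R$, not $U$ itself, and you state this correctly; the lemma's phrasing is slightly loose there, though the paper only ever applies it to invariant basins.

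The Siegel/Herman half, however, has a genuine gap, located exactly where you flag it, and your closing claim that both delicate points ``follow from the simple connectivity of $\mathbb{C}$ together with forward invariance'' does not repair it: that remark is pertinent to the attracting case only. Equicontinuity of the inverse branches on $U\cap N$ (hyperbolic isometries of $U$) says nothing about their behaviour on $N\setminus U$, which contains Julia points; and even granting a locally uniform limit on all of $N$, your final deduction is invalid as stated --- the natural limit along return times $n_k$ (chosen so that the irrational rotation satisfies $R^{n_k}|_U\to\mathrm{id}$) is the identity on $U\cap N$, and the convergence of a \emph{single subsequence} of iterates to the identity near $\zeta$ does not by itself make the full family $\{R^n\}$ normal at $\zeta$, i.e.\ does not ``place $\zeta$ in the interior of the rotation domain'' without an extra argument such as the blow-up property of the Julia set. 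The classical repair is shorter and bypasses both difficulties, and the paper itself quotes precisely the needed tool as Lemma~\ref{backward-contraction}: since $N$ is a disc avoiding the closure of the postcritical set, single-valued branches $g_{n_k}$ of $R^{-n_k}$ exist on $N$ by monodromy, chosen to agree with $(R|_U)^{-n_k}$ on a component of $U\cap N$; because $N$ meets $\mathcal{J}(R)$ at $\zeta$, Lemma~\ref{backward-contraction} forces every locally uniform limit of this family to be \emph{constant}; but $g_{n_k}\to\mathrm{id}$ on a nonempty open subset of $N$, so some limit is nonconstant --- contradiction, whence $\partial U$ lies in the postcritical closure. Note that this also dissolves your worry about the annular topology of a Herman ring: all branch selection happens on the simply connected disc $N$, and the injectivity of $R|_U$ (a conformal rotation of the disk or annulus) is all that is needed to pin down the branches.
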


\begin{lem}(Lemma 4.3, \cite{Nayak-Pal2022}) \label{Jconnected}
	Let $R$ be a rational map for which $\infty$ is a repelling fixed point. If $\mathcal{A}$ is an unbounded invariant immediate basin of attraction, then its boundary contains at least one pole of $R$. Furthermore, if all the poles of
	$R$  lie on the boundary of $\mathcal{A}$ and $\mathcal{A}$ is simply connected, then the Julia set of $R$ is connected.
\end{lem}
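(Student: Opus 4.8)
\medskip
\noindent\emph{Proof proposal.} I would prove the two assertions separately; both rest on the fact that the restriction of $R$ to an invariant immediate attracting basin is a proper map, together with standard facts about critical points and about connectivity of Julia sets.

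\smallskip
For the first assertion, since $\infty$ is repelling it lies in $\mathcal J(R)$, so unboundedness of $\mathcal A$ forces $\infty\in\overline{\mathcal A}\cap\mathcal J(R)=\partial\mathcal A$. As $\mathcal A$ is an invariant immediate attracting basin, $R\colon\mathcal A\to\mathcal A$ is proper; let $d'$ be its degree. By Lemma~\ref{cpoint}, $\mathcal A$ contains a critical point, so $R|_{\mathcal A}$ is not injective and $d'\ge2$. Because $\infty$ is a fixed point with finite nonzero multiplier, $R$ has local degree one at $\infty$, so I may fix a neighbourhood $N$ of $\infty$ on which $R$ is injective. I would then pick $v_j\in\mathcal A$ with $v_j\to\infty$: each $v_j$ has exactly $d'$ preimages in $\mathcal A$ (counted with multiplicity), while injectivity on $N$ shows that at most one preimage of $v_j$ in the whole sphere lies in $N$. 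Hence at least $d'-1\ge1$ of the preimages lying in $\mathcal A$ belong to the compact set $\widehat{\mathbb C}\setminus N$; a subsequential limit $\zeta$ of these satisfies $\zeta\in\overline{\mathcal A}$, $\zeta\neq\infty$ and $R(\zeta)=\lim_j v_j=\infty$. Thus $\zeta$ is a finite pole, and $R(\zeta)=\infty\in\mathcal J(R)$ gives $\zeta\in\mathcal J(R)$, whence $\zeta\in\partial\mathcal A$. This proves the first assertion.

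\smallskip
For the second assertion I would use the classical criterion that $\mathcal J(R)$ is connected if and only if every Fatou component is simply connected. Since $\mathcal A$ is simply connected, $K_0:=\widehat{\mathbb C}\setminus\mathcal A$ is a connected compact set containing $\infty$ and, by hypothesis, every pole of $R$. The first step is a structural observation: if $W\neq\mathcal A$ is any Fatou component, then $\overline{\mathcal A}$---being connected and disjoint from $W$---lies in a single complementary component of $W$; consequently every other ``hole'' of $W$ avoids $\overline{\mathcal A}$ and therefore contains neither $\infty$ nor any pole. Next, writing $R^{-1}(\mathcal A)=\mathcal A\cup U_1\cup\cdots\cup U_r$, each $U_i$ maps properly onto the simply connected $\mathcal A$; granting that $R^{-1}(K_0)$ is connected, the inclusion $\partial U_j\subseteq R^{-1}(\partial\mathcal A)\subseteq R^{-1}(K_0)$ shows that $\widehat{\mathbb C}\setminus U_i$ is connected for each $i$, so every $U_i$ is simply connected, and by induction so is every component of $R^{-n}(\mathcal A)$. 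The remaining components are handled by the structural observation, which forbids any hole from enclosing a pole or $\infty$, and the criterion then yields connectivity of $\mathcal J(R)$.

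\smallskip
The hard part is the middle step: turning ``all poles lie on $\partial\mathcal A$'' into the connectivity of $R^{-1}(K_0)$ (equivalently, into the simple connectivity of every Fatou component). A local or maximum-principle argument will not suffice, because the forward orbit of any hole meets the backward orbit of $\infty$, which is dense in $\mathcal J(R)$, so high iterates of $R$ do acquire poles inside every hole. The correct argument must be global and homological: applying Riemann--Hurwitz to the proper restrictions $R\colon U\to R(U)$ and to $R$ on the whole sphere, one must show that a multiply connected Fatou component would be forced to enclose a pole or the point $\infty$ in one of its holes---precisely what the placement of all poles on $\partial\mathcal A$ excludes. Carrying out this Euler-characteristic bookkeeping, rather than the two clean end steps, is where I expect the real difficulty to lie.
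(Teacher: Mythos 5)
Your proof of the first assertion is correct and complete: $\infty\in\mathcal{J}(R)\cap\overline{\mathcal{A}}=\partial\mathcal{A}$, properness of $R|_{\mathcal{A}}$ with degree $d'\geq 2$ (via Lemma~\ref{cpoint}), local injectivity of $R$ near the repelling fixed point $\infty$, and a compactness argument producing a finite point $\zeta\in\overline{\mathcal{A}}$ with $R(\zeta)=\infty$; complete invariance of the Julia set then places this pole on $\partial\mathcal{A}$. (For the record, the paper does not prove this lemma at all --- it imports it as Lemma~4.3 of \cite{Nayak-Pal2022} --- but the technique needed for the second assertion is displayed by the paper itself inside its proof of Theorem~\ref{connected-convergent}.)

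The second assertion, however, is not proved, and the gap is genuine, not mere bookkeeping left to the reader. You explicitly ``grant'' the connectivity of $R^{-1}(K_0)$ and defer the Euler-characteristic computation, but that step \emph{is} the lemma. Two concrete problems. First, your structural observation only says that at most one complementary component of a Fatou component $W$ meets $\overline{\mathcal{A}}$; it does not ``handle the remaining components'': a hole of $W$ that meets $\mathcal{J}(R)$ but contains neither a pole nor $\infty$ is exactly the configuration that must be excluded, and nothing in your outline excludes it. Second, Riemann--Hurwitz alone cannot close this: it relates $\chi(U)$ to $\chi(R(U))$ through the critical points in $U$, over which the hypotheses give no control, and your induction on components of $R^{-n}(\mathcal{A})$ says nothing about Fatou components outside the grand orbit of $\mathcal{A}$ (preimages of other periodic components, Herman rings, and so on). The missing idea is dynamical rather than homological: the blowing-up property of the Julia set. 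If some component $W$ had a hole, pick a Jordan curve $\gamma\subset W$ whose bounded side $D$ meets $\mathcal{J}(R)$. Since $\infty$ is a repelling fixed point, prepoles are dense in $\mathcal{J}(R)$, so $R^n(D)\ni\infty$ for some $n$; take $n$ minimal. Then $R^{n-1}(D)$ contains a pole $\xi$, while $\overline{R^{n-1}(D)}$ misses $\infty$, because $\partial R^{n-1}(D)\subseteq R^{n-1}(\gamma)$ lies in the Fatou set. Since $\xi\in\partial\mathcal{A}$ and $\infty\in\overline{\mathcal{A}}$, the connected set $\mathcal{A}$ has points inside $R^{n-1}(D)$ and points outside its closure, hence meets $R^{n-1}(\gamma)$, forcing $R^{n-1}(\gamma)\subset\mathcal{A}$; then simple connectivity of $\mathcal{A}$ (its complement is connected and contains $\infty$) forces $R^{n-1}(D)\subset\mathcal{A}$, contradicting $\xi\in\partial\mathcal{A}$. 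So every Fatou component is simply connected and $\mathcal{J}(R)$ is connected. This argument --- precisely the step you flagged as ``where the real difficulty lies'' and the one the paper runs for $\mathcal{A}_0$ in proving Theorem~\ref{connected-convergent} --- is what your proposal is missing.
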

The behaviour of analytic branches of the inverse of the iterates of a rational function on a domain intersecting its Julia set is going to be useful for this article. We put Theorem~9.2.1  and Lemma~9.2.2 of ~\cite{Beardon_book} together to make this precise.
\begin{lem}
For a rational function $R$, let $f_k$ be a single-valued  analytic branch of $R^{-k}$ defined in a domain $D$. If $D$ intersects the Julia set of $R$, then the family $\{f_k\}_{k>0}$ is normal and each of its uniform limits is constant.	
	\label{backward-contraction}
\end{lem}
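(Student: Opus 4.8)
The plan is to prove the two assertions---normality of $\{f_k\}_{k>0}$ and constancy of each of its uniform limits---separately, in both cases reducing to Montel's theorem applied on a suitable domain meeting $\mathcal{J}(R)$. Throughout I would write the defining relation as $R^k\circ f_k=\mathrm{id}_D$, so that $f_k(D)\subseteq R^{-k}(D)$, and I would record the elementary but crucial consequence of the complete invariance of the Julia set: if $z\in D\cap\mathcal{J}(R)$ then $f_k(z)\in R^{-k}(z)\subseteq\mathcal{J}(R)$ for every $k$. I assume, as is implicit, that $\deg R\ge 2$, so that $\mathcal{J}(R)$ is an infinite perfect set and the repelling periodic points of $R$ are dense in it.

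For normality I would argue locally: since normality is a local property, it suffices to produce, for each $p\in D$, a disk $\Delta$ with $p\in\Delta\subseteq\overline{\Delta}\subseteq D$ on which $\{f_k|_{\Delta}\}$ is normal. The key observation is that a point $\alpha$ is omitted by every $f_k$ on $\Delta$ as soon as the forward orbit $\{R^k(\alpha):k\ge 1\}$ avoids $\Delta$, because $f_k(z)=\alpha$ forces $z=R^k(\alpha)\in\Delta$. Now I would invoke the existence of infinitely many distinct repelling periodic cycles of $R$ and choose three of them, $O_1,O_2,O_3$, none of which contains $p$; since each $O_i$ is a finite set at positive distance from $p$, a sufficiently small $\overline{\Delta}\subseteq D$ is disjoint from $O_1\cup O_2\cup O_3$. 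The forward orbit of each point of $O_i$ is exactly $O_i$ and hence avoids $\Delta$, so the three (or more) distinct points of $O_1\cup O_2\cup O_3$ are common omitted values of the whole family $\{f_k|_{\Delta}\}$. Montel's theorem then gives normality on $\Delta$, and hence on $D$.

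For constancy I would take a locally uniformly convergent subsequence $f_{k_j}\to f$ (with $k_j\to\infty$) and suppose, for contradiction, that $f$ is non-constant, hence an open map. Fixing $z_0\in D\cap\mathcal{J}(R)$, the observation above gives $\omega_0:=f(z_0)=\lim_j f_{k_j}(z_0)\in\mathcal{J}(R)$. Openness of $f$ together with Hurwitz's theorem yields a disk $\Delta_0=\Delta(z_0,r)\subseteq D$ and a $\delta>0$ such that, for all large $j$, the ball $V_0:=B(\omega_0,\delta)$ is contained in $f_{k_j}(\Delta_0)$; for $w\in V_0$, writing $w=f_{k_j}(z)$ with $z\in\Delta_0$ gives $R^{k_j}(w)=z\in\Delta_0$, so that $R^{k_j}(V_0)\subseteq\Delta_0\subsetneq\widehat{\mathbb{C}}$ for all large $j$. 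Since $\Delta_0$ omits more than two points, Montel's theorem makes $\{R^{k_j}|_{V_0}\}$ normal, and I would pass to a further subsequence with $R^{k_{j_i}}\to\Psi$ locally uniformly on $V_0$. Finally I would exploit that $V_0$ meets $\mathcal{J}(R)$: choosing a repelling periodic point $\zeta\in V_0$ of period $q$ and restricting to a residue class $k_{j_i}\equiv a \pmod q$, the values $R^{k_{j_i}}(\zeta)$ stay in the finite cycle of $\zeta$ while $|(R^{k_{j_i}})'(\zeta)|=|(R^q)'(\zeta)|^{(k_{j_i}-a)/q}\,|(R^a)'(\zeta)|\to\infty$. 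This contradicts the convergence of $R^{k_{j_i}}$, and hence of its derivatives at $\zeta$, to the meromorphic limit $\Psi$, which takes a finite value at $\zeta$. Therefore $f$ must be constant.

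I expect the genuine difficulty to lie entirely in the constancy statement, and more precisely in the last step: ruling out local uniform convergence of a subsequence $R^{k_{j_i}}$ on a set meeting $\mathcal{J}(R)$. A single subsequence being equicontinuous near a Julia point does not by itself contradict the definition of $\mathcal{J}(R)$, which concerns the full family $\{R^n\}$, so the repelling-periodic-point argument via derivative blow-up is essential and must be carried out with care about the value of $\Psi$ at $\zeta$, including the case where $\zeta$ lands at $\infty$, which is handled in a chart. The normality part, by contrast, is routine once the three omitted values are exhibited, and the Hurwitz covering estimate that confines $R^{k_j}(V_0)$ inside $\Delta_0$ is the only other point requiring mild care.
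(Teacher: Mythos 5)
Your proof is correct. The paper does not actually prove this lemma---it is stated by combining Theorem 9.2.1 and Lemma 9.2.2 of Beardon's book by citation---and your argument (Montel's theorem with three repelling cycles as common omitted values of every branch $f_k$ for normality, and, for constancy, the Hurwitz covering step $V_0\subseteq f_{k_j}(\Delta_0)$ forcing $R^{k_j}(V_0)\subseteq\Delta_0$, contradicted by the multiplier blow-up $|(R^q)'(\zeta)|^{n_i}\to\infty$ at a repelling periodic point $\zeta\in V_0$) is essentially the standard proof found there.
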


\section{Proofs of results}
\label{proofs}
In view of the discussion preceding Equation~(\ref{generalform-polynomial}), it is enough to prove Theorems ~\ref{connected-convergent} and ~\ref{equal-multiplicity} for $p(z)=z^k (z-1)^m$ for $k,m \geq 1$. Recall that 
$$C_p(z)   = z-z(z-1)\left(\frac{(k+m)\left(3 k+3m -1\right) z^2
	-2k\left(3 k+3m -1\right) z+3k^2-k}
{2\left(kz+mz-k\right)^3}\right).$$
The next four lemmas discuss the critical points, extraneous fixed points and the behaviour of $C_p$ on the real line.

\begin{lem}
There is no critical point of $C_p$ in $(-\infty, 0) \cup (1, \infty)$. Furthermore, $C_p '(x)>0$ for all $x \in (-\infty, 0) \cup (1, \infty)$. 
\label{criticalpointsC_p}
\end{lem}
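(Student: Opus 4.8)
The plan is to reduce both assertions to the positivity of a single expression whose sign can be read off directly. First I would record the general derivative formula
$$C_p'(z) = \frac{p^2\bigl(3(p'')^2 - p'p'''\bigr)}{2(p')^4},$$
obtained by differentiating $C_p = z - \left(1+\tfrac12 L_p\right)\frac{p}{p'}$ and simplifying using $L_p = pp''/(p')^2$. Since the conclusion concerns the sign of $C_p'$ on the real line, and since the critical points lying in $(-\infty,0)\cup(1,\infty)$ are precisely the real zeros of $C_p'$ there (none of these points is a pole, as the only pole $\tfrac{k}{k+m}$ lies in $(0,1)$), it suffices to control the sign of the numerator on these two rays.

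Next I would pass to the logarithmic derivative $L := p'/p = \dfrac{k}{z} + \dfrac{m}{z-1}$. Substituting $p''/p = L^2 + L'$ and $p'''/p = L^3 + 3LL' + L''$ into the formula above turns it into
$$C_p'(z) = \frac{2L^4 + 3L^2 L' + 3(L')^2 - L L''}{2L^4}.$$
On $(-\infty,0)\cup(1,\infty)$ we have $L \neq 0$, so $2L^4 > 0$ and the sign of $C_p'$ equals the sign of the numerator $\Phi := 2L^4 + 3L^2 L' + 3(L')^2 - LL''$.

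The decisive step is to make $\Phi$ manifestly positive. Writing $a = 1/z$ and $b = 1/(z-1)$, so that $L = ka + mb$, $L' = -(ka^2 + mb^2)$ and $L'' = 2(ka^3 + mb^3)$, the quantity $\Phi$ becomes a homogeneous quartic form in $a,b$. Expanding and collecting terms, I would verify that
\begin{align*}
\Phi &= k^2(2k-1)(k-1)\,a^4 + 2km(4k+1)(k-1)\,a^3 b \\
&\quad + 3km(4km - k - m + 2)\,a^2b^2 + 2km(4m+1)(m-1)\,ab^3 + m^2(2m-1)(m-1)\,b^4.
\end{align*}
For $k,m \geq 1$ every coefficient is nonnegative, and the middle coefficient is strictly positive since $4km - k - m + 2$ is increasing in each variable and equals $3m+1$ at $k=1$. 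Finally I would observe that on each ray the numbers $a$ and $b$ share a common sign — both positive on $(1,\infty)$, both negative on $(-\infty,0)$ — so that each monomial $a^4, a^3b, a^2b^2, ab^3, b^4$ is positive. Hence $\Phi > 0$ throughout $(-\infty,0)\cup(1,\infty)$, which gives $C_p'(x) > 0$ and, a fortiori, the absence of critical points there.

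The main obstacle is purely one of finding the right coordinates: in the raw variable $z$ the numerator is a quartic whose positivity is not evident, and the four terms of $\Phi$ in the $L$-variables do not have a common sign. The decisive move is the substitution $a = 1/z$, $b = 1/(z-1)$, after which positivity collapses to the finite, elementary check that the five coefficients of a binary quartic are nonnegative, together with the single inequality $4km - k - m + 2 > 0$. I expect the bookkeeping in expanding $\Phi$ to be the only laborious part, but it is entirely mechanical.
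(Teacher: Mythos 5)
Your proof is correct; I checked the expansion of $\Phi$ as a binary quartic form in $a=1/z$, $b=1/(z-1)$, and all five coefficients are exactly as you state, with the middle coefficient $3km(4km-k-m+2)$ strictly positive (a point that matters: for $k=m=1$ the other four coefficients vanish, so your observation is essential there). The paper starts from the same derivative formula $C_p'=p^2\left(3(p'')^2-p'p'''\right)/\left(2(p')^4\right)$, but then substitutes $p(z)=z^k(z-1)^m$ directly to get $C_p'(z)=F(z)/\left(2((k+m)z-k)^4\right)$ with $F$ an explicit quartic in $z$, and proves positivity of $F$ by two separate coefficient checks: on $(-\infty,0)$ it exploits the sign pattern of $F$ (positive coefficients on even powers, negative on odd powers), while on $(1,\infty)$ it expands $F(x+1)$ and verifies all coefficients are nonnegative with the $x^2$ coefficient bounded below by $1$. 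Your change of variables is a genuinely different execution of this positivity step: since $L=((k+m)z-k)/(z(z-1))$, one has $F(z)=\Phi\cdot\left(z(z-1)\right)^4$, so your computation amounts to expanding the same quartic $F$ in the basis $\left\{z^{i}(z-1)^{4-i}\right\}$ rather than in powers of $z$ or of $z-1$. What this buys is a single, unified coefficient check that treats both rays simultaneously and makes the $k\leftrightarrow m$, $z\leftrightarrow 1-z$ symmetry manifest; what the paper's version buys is avoiding the logarithmic-derivative bookkeeping, at the cost of two separate ad hoc arguments. Both proofs correctly note the side condition that the only pole $k/(k+m)$ lies in $(0,1)$ (equivalently, in your setup, that $L\neq 0$ on the two rays because $a$ and $b$ share a sign there), so that critical points on the rays are precisely the zeros of $C_p'$.
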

\begin{proof}
Note that $$\begin{aligned}
	C_p^{\prime}(z) &=\frac{1}{2}\left(\frac{p(z)p''(z)}{\left(p'(z)\right)^2}\right)^2\left(3- \frac{p'(z)p'''(z)}{\left(p''(z)\right)^2}\right) =\left(p(z)\right)^2\left(\frac{\left(3\left(p''(z)\right)^2-p'(z)p'''(z)\right)}{2(p'(z))^4}\right)\\
	&=z^{2k}(z-1)^{2m}\left(\frac{3z^{2k-4}(z-1)^{2m-4}\left(f(z)\right)^2-z^{2k-4}(z-1)^{2m-4}\left((k+m)z-k \right)g(z)}{2(p'(z))^4}\right)\\
	&=\frac{3\left(f(z)\right)^2-\left(kz+mz-k \right)g(z)}{2\left(kz+mz-k \right)^4},
\end{aligned}
$$ where the polynomials $f,g $
 are as defined just after Equation~(\ref{generalform-polynomial}).
In other words,  $$C_p'(z)=\frac{F(z)}{2\left(kz+mz-k \right)^4},$$  where  
$F(z)=(k+m)^2 (k+m-1)(2k+2m-1)z^4 +4k(k+m)(k+m-1)(-2 k-2m+1)z^3+ 
3k(k(k+m-1)(3 k+3m -2)+(k-1)(k+m)^2)z^2 -2k(k-1)((4k+1)(k+m)-3k)z+k^2(k-1)(2k-1)$.
The critical points of  $C_p$ are $\frac{k}{k+m}$ (counted twice) and the four roots of the quartic polynomial $F$.

Note that the coefficients of the odd powers of $z$ in $F(z)$ are always negative, whereas those of the even powers are positive. Therefore, $F(z)$ is   positive for all $z \in (-\infty,0)$. Hence $C_p$ has no critical point in $(-\infty,0)$.
\par  In order to examine the behaviour of $F$ on  $(0,\infty)$, we consider
$F(x+1)=(k+m)^2(k+m-1)(2(k+m)-1)x^4+4m(k+m)(k+m-1)(2(k+m)-1)x^3+3m(k^2(4m-1)+km(8m-7)+2m(2m^2-3m+1))x^2+2m(m-1)(k(4m+1)+2m(2m-1))x+m^2(2m^2-3m+1)$.
For $k,m \geq 1$, each coefficient  of $F(x+1)$ is either zero or  positive. Moreover, the coefficient of $x^2$ is strictly greater than $1$. This gives that $F(x+1)>1$ for every $x>0$. Since every real number in $(1,\infty)$ can be written as $1+x$, for some $x>0$, it follows that $F((1,\infty))\subset (1,\infty)$. In other words,  $F$ has no zero in $(1,\infty)$, i.e., $C_p$ has no critical point in $(1,\infty)$.
\par It also follows from the two previous paragraphs that $C_p '(x)>0$ for all $x \in (-\infty, 0) \cup (1,  \infty)$.
\end{proof}
It follows from Equation $(\ref*{eq:Cp})$ that the extraneous fixed points of $C_p$ are precisely the roots of  $E(z)=(k+m)\left(3k+3m-1\right) z^2-2k\left(3k+3m-1\right) z+3k^2-k$. 
These are given explicitly by
$\frac{2k\left(3k+3m-1\right)\pm \sqrt{(2k\left(3k+3m-1\right))^2-4(k+m)\left(3k+3m-1\right)(3k^2-k)}}{2(k+m)\left(3k+3m-1\right))}$, which upon simplification is nothing but,
$$
 \frac{k\left(3k+3m-1\right)\pm \sqrt{km\left(3k+3m-1\right)}}{(k+m)\left(3k+3m-1\right))}.
 $$
 
Since $km\left(3k+3m-1\right) > 0$ for all $k,m \geq 1$, both of these extraneous fixed points are real and distinct. Denoting the smaller and the larger extraneous fixed points by $e_1$ and $e_2$  respectively, we have\\
$$
\begin{aligned}
	e_1&=\frac{k\left(3k+3m-1\right)- \sqrt{km\left(3k+3m-1\right)}}{(k+m)\left(3k+3m-1\right))}=\frac{k}{k+m}-\frac{\sqrt{km\left(3k+3m-1\right)}}{(k+m)\left(3k+3m-1\right)}~\mbox{and}\\
	e_2&=\frac{k\left(3k+3m-1\right)+ \sqrt{km\left(3k+3m-1\right)}}{(k+m)\left(3k+3m-1\right))}=\frac{k}{k+m}+\frac{\sqrt{km\left(3k+3m-1\right)}}{(k+m)\left(3k+3m-1\right)}.
\end{aligned}
$$
Since $\frac{k}{k+m}$ is the pole of $C_p$ and $\frac{\sqrt{km\left(3k+3m-1\right)}}{(k+m)\left(3k+3m-1\right)}>0$ for all $k,m \geq 1$, the points $e_1$ and $e_2$ lie on opposite sides of the pole and are equidistant from it. More information on extraneous fixed points is provided below.
 
\begin{lem}
	Both extraneous fixed points of $C_p$ lie in the interval $(0,1)$,  and their multipliers are greater than $1$.  In particular, both are repelling.
	\label{extraneous}
\end{lem}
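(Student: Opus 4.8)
The plan is to treat the two assertions separately. For the location, I would use the explicit expressions for $e_1$ and $e_2$ already displayed: writing them as $\tfrac{k}{k+m}\mp\delta$ with $\delta=\frac{\sqrt{km(3k+3m-1)}}{(k+m)(3k+3m-1)}>0$, the inequality $e_1>0$ becomes, after clearing the (positive) denominator and squaring, $k(3k+3m-1)>m$, while $e_2<1$ becomes $m(3k+3m-1)>k$; both hold trivially for $k,m\ge 1$ since $3k+3m-1\ge 5$. As the pole $\tfrac{k}{k+m}$ lies strictly between $e_1$ and $e_2$ and in $(0,1)$, this puts both extraneous fixed points in $(0,1)$.

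For the multipliers, the starting observation is that at an extraneous fixed point $e$ we have $p(e)\ne 0$ (indeed $e\in(0,1)$ is neither root), so $C_p(e)=e$ forces $1+\tfrac12 L_p(e)=0$, i.e. $L_p(e)=-2$. Substituting this into the differentiated form of $C_p$ obtained in the proof of Lemma~\ref{criticalpointsC_p}, namely $C_p'(z)=\tfrac12 L_p(z)^2\big(3-\tfrac{p'(z)p'''(z)}{(p''(z))^2}\big)$, collapses the leading factor to $2$ and gives $C_p'(e)=2\big(3-\tfrac{p'(e)p'''(e)}{(p''(e))^2}\big)$. Using the factorizations of $p',p'',p'''$ displayed after Equation~(\ref{generalform-polynomial}), this ratio equals $\frac{((k+m)z-k)\,g(z)}{f(z)^2}$; write $w:=(k+m)z-k$. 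Two exact evaluations at $e$ are then available: first, a direct check shows $E(z)=f(z)+2w^2$, so the equation $E(e)=0$ reads precisely $f(e)=-2w^2$; second, from the displayed value of $\delta$ one gets $w^2=\tfrac{km}{3k+3m-1}$ at both $e_1,e_2$.

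It remains to handle $g(e)$. I would differentiate $p''=z^{k-2}(z-1)^{m-2}f(z)$ to obtain the identity $g(z)=f(z)\big[(k+m-4)z-(k-2)\big]+z(z-1)f'(z)$, together with $f'(z)=2(k+m-1)w$. Feeding $f(e)=-2w^2$ and $e=\tfrac{k+w}{k+m}$ into the resulting expression for $C_p'(e)$ and simplifying with $w^2=\tfrac{km}{3k+3m-1}$, I expect everything to collapse to the closed form
$$C_p'(e)=\frac{(5n-2)(2n-1)}{n^2}+\frac{(3n-1)^2(m-k)\,w}{km\,n^2},\qquad n=k+m,\quad w=\pm\sqrt{\tfrac{km}{3n-1}},$$
the sign being $-$ at $e_1$ and $+$ at $e_2$.

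Finally, $\frac{(5n-2)(2n-1)}{n^2}>1$ is just $(3n-1)(3n-2)>0$, so the constant part already exceeds $1$; the only delicate case is the extraneous point at which the $w$-term is negative. There $C_p'(e)>1$ reduces, after isolating the radical and squaring, to $km(3n-2)^2>(m-k)^2(3n-1)$, which via $(m-k)^2=n^2-4km$ and the identity $(3n-2)^2+4(3n-1)=9n^2$ simplifies all the way down to $9km>3k+3m-1$ — immediate for $k,m\ge 1$. Hence both multipliers exceed $1$ and the fixed points are repelling. The main obstacle is entirely computational: carrying out the two substitutions above and verifying that the bulky intermediate expression really does collapse to the displayed closed form; once the identity $(3n-2)^2+4(3n-1)=9n^2$ is noticed, the concluding inequality is trivial.
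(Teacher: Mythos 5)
Your proof is correct, and the multiplier half takes a genuinely different route from the paper's. For the location of $e_1,e_2$ your argument (clear the positive denominator, square, reduce to $k(3k+3m-1)>m$ and $m(3k+3m-1)>k$) is the same as the paper's, which phrases the final inequalities as $(3k-1)(k+m)>0$ and $(3m-1)(k+m)>0$. For the multipliers, however, the paper computes nothing in place: it conjugates $C_p$ to $C_g$ with $g(z)=(z-1)^k(z+1)^m$ via the Scaling property (Lemma~\ref{scaling}), imports the multiplier formula from Lemma 3.1 of \cite{Sym_dyn}, invokes invariance of multipliers under conformal conjugacy, and then merely asserts that ``it can be seen'' that each multiplier exceeds $1$. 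You instead derive the formula from scratch inside the present setup: the observation $L_p(e)=-2$ at an extraneous fixed point (equivalent to $E(e)=0$ precisely because of your identity $E=f+2w^2$, which checks out), the expression $C_p'=\tfrac12 L_p^2\bigl(3-\tfrac{p'p'''}{(p'')^2}\bigr)$ already displayed in the proof of Lemma~\ref{criticalpointsC_p}, the differentiation identity $g=f\cdot[(k+m-4)z-(k-2)]+z(z-1)f'$ with $f'=2(k+m-1)w$, and $w^2=\tfrac{km}{3k+3m-1}$. Your claimed closed form for $C_p'(e)$ is not merely ``expected'': it agrees exactly with the formula the paper quotes from \cite{Sym_dyn}, since $1+\tfrac{(3n-1)(3n-2)}{n^2}=\tfrac{(5n-2)(2n-1)}{n^2}$, so the collapse is genuine; and your concluding chain $km(3n-2)^2>(m-k)^2(3n-1)$, rewritten via $(m-k)^2=n^2-4km$ and $(3n-2)^2+4(3n-1)=9n^2$ as $9km>3k+3m-1$, i.e.\ $(3k-1)(3m-1)>0$, supplies precisely the verification the paper leaves implicit. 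The trade-off: the paper's route is shorter but depends on an external lemma proved for centered polynomials; yours is self-contained, never leaves $p(z)=z^k(z-1)^m$, and makes the repelling inequality fully explicit.
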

\begin{proof}
		Since $e_1<e_2$, it suffices to show that $e_1>0$ and $e_2<1$ for all $k,m \geq 1$.\\
	First, $e_1>0$ is equivalent to  $\frac{k}{k+m}-\frac{\sqrt{km\left(3k+3m)-1\right)}}{(k+m)\left(3k+3m-1\right)}>0,$ which implies
	$\frac{k}{k+m}>\frac{\sqrt{km\left(3k+3m-1\right)}}{(k+m)\left(3k+3m-1\right)}.$ Squaring both sides gives
	$\left(\frac{k}{k+m}\right)^2>\frac{km\left(3k+3m-1\right)}{(k+m)^2\left(3k+3m-1\right)^2},$ which simplifies to
	$k>\frac{m}{\left(3k+3m-1\right)}.$ Equivalently,
	$(3k-1)(k+m)>0 $ and this inequality holds for all $k,m \geq 1$. Thus $e_1>0$.\\
	Similarly,
	$e_2<1$ is equivalent to $\frac{k}{k+m}+\frac{\sqrt{km\left(3k+3m-1\right)}}{(k+m)\left(3k+3m-1\right)}<1,$ which implies
	$\frac{\sqrt{km\left(3k+3m-1\right)}}{(k+m)\left(3k+3m-1\right)}<\frac{m}{k+m}.$ Squaring both sides gives
	$\frac{km\left(3k+3m-1\right)}{(k+m)^2\left(3k+3m-1\right)^2}<\left(\frac{m}{k+m}\right)^2,$ or equivalently
	$\frac{k}{\left(3k+3m-1\right)}<m.$ This can be written as
	$(3m-1)(k+m)>0,$  which is actually
	true for all $k,m \geq 1.$ Therefore $e_2<1$.
	
	\par 
	Consider $g(z)=(z-1)^k(z+1)^m$. Let $T(z)=\frac{-z+1}{2}$ so that $T^{-1}(z)=-2z+1$. Then $g(z)=\lambda p(T(z))$, where $\lambda=(-2)^{k+m}$. By Lemma \ref{scaling}, $T \circ C_g \circ T^{-1}=C_p$. It is proved in  Lemma 3.1, \cite{Sym_dyn} that the multipliers of the two extraneous fixed points of $C_g$ are
	
	$1+\frac{(3k+3m-1)^2}{km(k+m)^2} \left[km (\frac{3k+3m-2}{3k+3m-1})\pm(k-m)\sqrt{\frac{km}{3k+3m-1}} \right]$. It can be seen that each of these multplier is greater than one.
	\par  The extraneous fixed points of $C_g$ are mapped to those of $C_p$ by the conjugating map $T$. 
The multiplier of a fixed point remains unchanged under conformal conjugacy (see page 36 of \cite{Beardon_book}). This  completes the proof.
\end{proof}
\begin{rem}
It follows that $0<e_1<\xi<e_2<1$, where $\xi=\frac{k}{k+m}$ is the pole of $C_p$.
\label{extraneous-rem}
\end{rem}
 
It is immediate from Equation (\ref*{eq:Cp}) that
\begin{align}
	C_p(x)-x &=-x(x-1)\left(\frac{(k+m)\left(3k+3m-1\right) x^2-2k\left(3k+3m-1\right)x+3k^2-k}{2\left((k+m)x-k\right)^3}\right) \nonumber \\
	&=\frac{-x(x-1)E(x)}{2\left((k+m)x-k\right)^3}, ~\mbox{where}~  E(x)= (k+m)\left(3k+3m-1\right)(x-e_1)(x-e_2).
  \label{eq:CpR}
\end{align}
 Here $e_1$ and $ e_2 $ are extraneous fixed points of $C_p$. We list some useful information on $C_p$ that follow from Equation~(\ref{eq:CpR}) and the facts that $E(x)>0$ for $x \in (-\infty,0)\cup (1,\infty)$ and, $2\left((k+m)x-k\right)^3$ is negative and positive for $x<0$ and $x>1$, respectively (see Remark~\ref{extraneous-rem}).
 
 \begin{lem} Let $ e_1$ and $ e_2$ be the extraneous fixed points and $\xi$ be the pole of $C_p$. Then the following holds.
 	\begin{enumerate}
 		\item For $x \in (-\infty,0)$ and $x \in (1,\infty)$, we have $ C_p(x)>x$ and $C_p (x)<x$, respectively.
 		\item For $x \in (0, e_1)$ and $x \in (e_1, \xi)$, we have $ C_p(x)<x$ and $C_p (x)>x$, respectively.
 		\item $\lim_{x \to {\xi}^-}C_p(x)=\infty  ~~\text{and}~~\lim_{x \to {\xi}^+}C_p(x)=-\infty.$
 	\end{enumerate}
 	\label{C_p-real}
 \end{lem}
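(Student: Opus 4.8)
The plan is to read off all three statements directly from the factored expression for $C_p(x)-x$ in Equation~(\ref{eq:CpR}), together with the ordering $0<e_1<\xi<e_2<1$ recorded in Remark~\ref{extraneous-rem}. The key observation is that both the numerator and the denominator split into linear factors whose signs are trivial to track on each interval. Writing $(k+m)x-k=(k+m)(x-\xi)$, the denominator $2\left((k+m)x-k\right)^3$ equals $2(k+m)^3(x-\xi)^3$, so it is positive for $x>\xi$, negative for $x<\xi$, and vanishes (to odd order) only at $\xi$. The numerator is $-x(x-1)E(x)$ with $E(x)=(k+m)(3k+3m-1)(x-e_1)(x-e_2)$, and since $(k+m)(3k+3m-1)>0$, its sign is exactly that of $-x(x-1)(x-e_1)(x-e_2)$, a product of four linear factors carrying a fixed positive constant.

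First I would dispose of parts (1) and (2) by a sign table over each relevant interval. On each of $(-\infty,0)$, $(0,e_1)$, $(e_1,\xi)$ and $(1,\infty)$ the four factors $x$, $x-1$, $x-e_1$, $x-e_2$ keep constant signs by the ordering of the five marked points, so the numerator has a constant sign there; combining it with the sign of $(x-\xi)^3$ yields the sign of $C_p(x)-x$. For example, on $(-\infty,0)$ all four factors are negative, the numerator is negative, the denominator is negative, and the quotient is positive, giving $C_p(x)>x$; on $(1,\infty)$ all four factors are positive while $x>\xi$, so the quotient is negative and $C_p(x)<x$. The two subintervals of $(0,\xi)$ are handled identically, the only change being that $x-e_1$ switches sign as $x$ crosses $e_1$, which flips the conclusion from $C_p(x)<x$ on $(0,e_1)$ to $C_p(x)>x$ on $(e_1,\xi)$.

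For part (3) I would observe that the numerator $-x(x-1)E(x)$ tends to a \emph{nonzero} limit as $x\to\xi$, since $\xi$ is distinct from $0$, $1$, $e_1$ and $e_2$; a quick sign count ($\xi(\xi-1)<0$ and $E(\xi)<0$ because $e_1<\xi<e_2$) shows this limit is negative. Meanwhile the denominator behaves like $2(k+m)^3(x-\xi)^3$, approaching $0$ through negative values as $x\to\xi^-$ and through positive values as $x\to\xi^+$. Hence $C_p(x)-x\to+\infty$ as $x\to\xi^-$ and $C_p(x)-x\to-\infty$ as $x\to\xi^+$, and since $x\to\xi$ is finite the same one-sided limits hold for $C_p(x)$ itself. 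I expect no real obstacle: the whole statement is a bookkeeping exercise in signs, and the single point requiring care is confirming that the numerator does not vanish at $\xi$, so that part (3) genuinely yields an infinite limit rather than an indeterminate form — and this is exactly what the strict inequalities $0<e_1<\xi<e_2<1$ guarantee.
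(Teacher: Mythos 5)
Your proposal is correct and is essentially the paper's own argument: the paper states this lemma as an immediate consequence of the factorization $C_p(x)-x=\frac{-x(x-1)E(x)}{2\left((k+m)x-k\right)^3}$ in Equation~(\ref{eq:CpR}) together with the ordering $0<e_1<\xi<e_2<1$ of Remark~\ref{extraneous-rem}, which is exactly the sign bookkeeping you carry out. Your treatment of part (3), checking that the numerator is nonzero (indeed negative) at $\xi$ while the denominator vanishes to odd order, correctly fills in the only detail the paper leaves implicit.
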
   
 We also need a lemma describing the mapping behaviour of $C_p$ on the real line.
 \begin{lem}  For $x<0$ and $x>1$, we have $ C_p (x)<0$ and $C_p (x)>1$, respectively.
 \end{lem}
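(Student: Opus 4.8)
The plan is to exploit the strict monotonicity of $C_p$ on the two unbounded rays, together with the fact that $0$ and $1$ are fixed points sitting at the endpoints of these rays. By Lemma~\ref{criticalpointsC_p}, $C_p'(x) > 0$ for every $x \in (-\infty, 0) \cup (1, \infty)$, so $C_p$ is strictly increasing on each of these intervals. The only pole of $C_p$ is $\xi = \frac{k}{k+m}$, which lies in $(0,1)$ by Remark~\ref{extraneous-rem}; hence $C_p$ is analytic, and in particular continuous, on the closed rays $(-\infty, 0]$ and $[1, \infty)$, and the monotonicity extends up to the endpoints via the mean value theorem (if $C_p$ is continuous on a closed interval and $C_p'>0$ on its interior, then $C_p$ is strictly increasing on the closed interval).

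Since $0$ and $1$ are roots of $p(z) = z^k(z-1)^m$, Lemma~\ref{multiplier-fixedpoints} guarantees that they are fixed points of $C_p$, so that $C_p(0) = 0$ and $C_p(1) = 1$. (This is also read off directly from Equation~(\ref{eq:Cp2}), whose numerator vanishes at both points while the denominator does not.) Combining the two ingredients, for $x < 0$ strict monotonicity on $(-\infty, 0]$ gives $C_p(x) < C_p(0) = 0$, and for $x > 1$ strict monotonicity on $[1, \infty)$ gives $C_p(x) > C_p(1) = 1$, which is exactly the claim. Note that this complements part~(1) of Lemma~\ref{C_p-real}: together they pin $C_p(x)$ into $(x,0)$ for $x<0$ and into $(1,x)$ for $x>1$, so the two rays are forward invariant.

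I expect no serious obstacle, since the whole argument rests on facts already established in the preceding lemmas. The only point requiring a moment of care is verifying that $C_p$ is genuinely continuous \emph{up to} the endpoints $0$ and $1$ rather than merely on the open rays, which reduces to checking that neither endpoint coincides with the pole $\xi$; this is immediate from Remark~\ref{extraneous-rem}, where $0<e_1<\xi<e_2<1$. A purely algebraic alternative would be to invoke Equation~(\ref{eq:CpR}) and analyse the sign of $\frac{-x(x-1)E(x)}{2\left((k+m)x-k\right)^3}$ relative to the targets $0$ and $1$ directly, but the monotonicity argument sidesteps this computation entirely and is the route I would take.
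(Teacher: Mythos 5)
Your proof is correct, and on the ray $(1,\infty)$ it is essentially the paper's own argument: both use $C_p'>0$ there (Lemma~\ref{criticalpointsC_p}) together with the fixed point $C_p(1)=1$ and continuity up to the endpoint. Where you genuinely differ is on $(-\infty,0)$: the paper does not use monotonicity at all on that ray; instead it reads the sign of $C_p(x)$ directly off Equation~(\ref{eq:Cp2}), noting that the even-power coefficients of the numerator are nonnegative and the odd-power ones nonpositive (with $A_0>0$), so the numerator is positive for $x<0$ while the denominator $2\left((k+m)x-k\right)^3$ is negative, whence $C_p(x)<0$. Your treatment is more uniform: the same monotonicity-plus-fixed-point argument handles both rays, using $C_p(0)=0$ from Lemma~\ref{multiplier-fixedpoints}, and it sidesteps the coefficient sign-checking (which in the paper requires the mild caveat that $A_2=A_3=0$ when $k=1$, so ``positive/negative'' should really read ``nonnegative/nonpositive''). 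One slip to fix: your parenthetical claim that the numerator in Equation~(\ref{eq:Cp2}) vanishes at both $0$ and $1$ is false at $z=1$ --- there the numerator equals $2m^3$, which matches the denominator's value (that is precisely why $C_p(1)=1$); the correct ``read-off'' is from Equation~(\ref{eq:Cp}), where the factor $z(z-1)$ annihilates the correction term at both roots. Since your primary justification is Lemma~\ref{multiplier-fixedpoints}, this does not affect the validity of the proof.
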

 \begin{proof}
 	 From Equation (\ref{eq:Cp2}), note that the numerator of $C_p(z)$ has positive coefficients of all the even powers and negative coefficients of all the odd powers  of $z$. This ensures that for $x \in (-\infty,0)$, the numerator is positive while the denominator is negative. Therefore, $C_p(x)<0$ for each $x < 0$. Moreover, $C_p(1)=1,C_p(\infty)=\infty$, and $C_p$ has no critical point in $(1,\infty)$ by Lemma~\ref{criticalpointsC_p}. This implies that $C_p$ is strictly increasing in $(1, \infty)$. In other words, $C_p (x)>1$ for each $x>1$ (see Figure \ref{Graph} and Figure~\ref{graph-phi} (b)).
 \end{proof}

\begin{figure}[h!]
	\begin{subfigure}{.5\textwidth}
		\centering
		\includegraphics[width=1\linewidth]{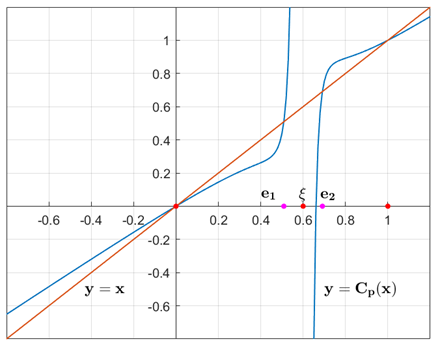}
		\caption{$k=6,m=4$}
	\end{subfigure}
	\begin{subfigure}{.5\textwidth}
		\centering
		\includegraphics[width=1\linewidth]{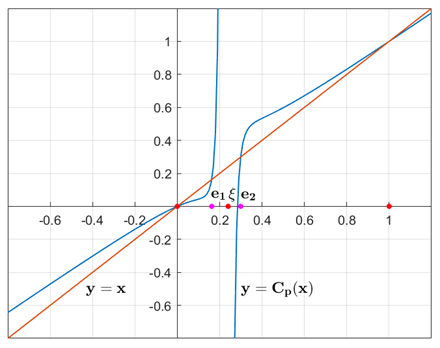}
		\caption{$k=3,m=10$}
	\end{subfigure}
	\caption{The graphs of $C_p: \mathbb{R} \to \mathbb{R}$ for $p(z)=z^k(z-1)^m.$}
	\label{Graph}
\end{figure}
\subsection{Proof of Theorem~\ref{connected-convergent}}
We require some additional information on the immediate basins for the proof of Theorem~\ref{connected-convergent}.
\begin{lem}
	The immediate basins $\mathcal{A}_0$ and $\mathcal{A}_1$  contain $(-\infty,e_1)$ and $(e_2, \infty)$, respectively. Furthermore, each such immediate basin contains  two critical points, counting multiplicity. 
	\label{criticalpoints-immediatebasins}
\end{lem}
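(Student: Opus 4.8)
The plan is to separate the statement into the two interval inclusions, which follow from the sign information already recorded for $C_p$ on the real line, and the critical-point count, which I would obtain by combining Lemma~\ref{cpoint} with the reflection symmetry of Lemma~\ref{symmetry}.

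First I would prove $(-\infty,e_1)\subset\mathcal{A}_0$ by a monotone-convergence argument. On $(-\infty,0)$ the map sends the interval into itself (as shown just above) and satisfies $C_p(x)>x$ by Lemma~\ref{C_p-real}(1), so for each $x<0$ the orbit $\{C_p^n(x)\}$ is increasing and bounded above by $0$; its limit is a fixed point in $(-\infty,0]$, and since $0$ is the only fixed point there (the extraneous ones lie in $(0,1)$ by Lemma~\ref{extraneous}), the orbit converges to $0$. On $(0,e_1)$ we have $C_p(x)<x$ by Lemma~\ref{C_p-real}(2); an orbit that stays in $(0,e_1)$ is decreasing and converges to the unique fixed point $0$ in $[0,e_1)$, while an orbit that leaves $(0,e_1)$ can only drop into $(-\infty,0]$ (no iterate meets the pole $\xi>e_1$, as all values stay below $e_1$) and then converges to $0$ by the first case. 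Hence $(-\infty,e_1)$ lies in the basin of $0$; being connected and containing $0$, it lies in the single Fatou component $\mathcal{A}_0$. The inclusion $(e_2,\infty)\subset\mathcal{A}_1$ is entirely analogous: $C_p$ maps $(1,\infty)$ into itself with $C_p(x)<x$ there, and reading the sign of $C_p(x)-x$ off Equation~(\ref{eq:CpR}) gives $C_p(x)>x$ on $(e_2,1)$, so that both $(e_2,1)$ and $(1,\infty)$ lie in the basin of $1$ and the connected interval $(e_2,\infty)$ lies in $\mathcal{A}_1$.

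For the count, recall from Lemma~\ref{criticalpointsC_p} that $C_p$ has six critical points with multiplicity: the pole $\xi=\frac{k}{k+m}$, which is critical of multiplicity two because $C_p$ has a pole of order three there, together with the four roots of the quartic $F$. Since $C_p(\xi)=\infty$ and $\infty$ is repelling (Lemma~\ref{multiplier-fixedpoints}), the critical point $\xi$ lies in no attracting basin, so only the four roots of $F$ can occur in $\mathcal{A}_0\cup\mathcal{A}_1$, and by Lemma~\ref{cpoint} each immediate basin contains at least one of them. The decisive point is that, by Lemma~\ref{symmetry}, each of $\mathcal{A}_0,\mathcal{A}_1$ contains a real interval and is therefore symmetric about $\mathbb{R}$: if the critical point it is guaranteed to contain is non-real, its conjugate is a distinct critical point lying in the same basin, forcing at least two there. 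Thus if every root of $F$ in $(0,1)$ is non-real, then $\mathcal{A}_0$ and $\mathcal{A}_1$ each contain at least two critical points, and since they are disjoint and only four roots of $F$ are available, each contains exactly two. When $k=1$ (respectively $m=1$) the root $0$ (respectively $1$) is simple, hence a super-attracting fixed point of $C_p$ with multiplier $0$ by Lemma~\ref{multiplier-fixedpoints}, i.e. a critical point of multiplicity two inside $\mathcal{A}_0$ (respectively $\mathcal{A}_1$); this supplies the two critical points of that basin directly, while Lemma~\ref{cpoint} together with the symmetry forces the remaining conjugate pair of roots of $F$ into the other basin.

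The step that needs genuine work is the non-reality claim, i.e. that $F$ has no real zero in the ambiguous region $(e_1,\xi)\cup(\xi,e_2)$, where basin membership is not yet settled. I would establish it by a sign analysis of $F$ on $(0,1)$ in the spirit of Lemma~\ref{criticalpointsC_p}: for $k,m\ge2$ one checks $F(0)=k^2(k-1)(2k-1)>0$, $F(1)=m^2(m-1)(2m-1)>0$ and, more substantially, $F>0$ on all of $(0,1)$, so that all four roots of $F$ are non-real; for $k=1$ or $m=1$ the polynomial $F$ acquires the factor $z^2$ or $(z-1)^2$, and the complementary quadratic is seen to have negative discriminant, producing the required conjugate pair. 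The hardest part is the positivity of $F$ on $(0,1)$, because there — unlike on the rays $(-\infty,0)$ and $(1,\infty)$, where the coefficients of $F$ have a uniform sign — the coefficients follow no fixed sign pattern, so the estimate must be organised by hand (for instance through the shift $z\mapsto z+\tfrac12$ used for the $(1,\infty)$ case, or by a suitable grouping of terms) rather than read off directly. Granting this positivity, the count above is immediate and the lemma follows.
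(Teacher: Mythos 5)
Your treatment of the two interval inclusions matches the paper's own proof (monotone orbits on $(-\infty,0)$ and $(1,\infty)$, the sign of $C_p(x)-x$ on $(0,e_1)$ and $(e_2,1)$, connectedness of the intervals), and that half is fine. The genuine gap is in the critical-point count, and you flag it yourself: your entire count hinges on the claim that $F$ has no real zero in $(0,1)$ when $k,m\ge 2$, and you never prove it --- ``granting this positivity'' is exactly the step that cannot be granted. This is not a routine sign check: on $(0,1)$, unlike on $(-\infty,0)$ and $(1,\infty)$, the coefficients of $F$ have mixed signs and $F$ comes very close to vanishing (for $(k,m)=(6,4)$ one has $F(z)=17100z^4-41040z^3+36216z^2-13920z+1980$, whose minimum on $(0,1)$ is of order $10$ while its coefficients are of order $10^4$), so the inequality is genuinely delicate. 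It does hold in the symmetric case $k=m$, where $F\bigl(\tfrac12+t\bigr)$ reduces to the biquadratic $k^2\bigl((32k^2-24k+4)t^4-6kt^2+\tfrac34\bigr)$, whose discriminant $-12k^2(5k-1)(k-1)$ is negative for $k\ge 2$, but you offer no argument for $k\ne m$. Your write-up is also internally inconsistent about what is needed: the counting argument requires that $F$ have no real zero anywhere in $(0,1)$ --- a real critical point in $(0,e_1)$ lies inside $\mathcal{A}_0$, is its own conjugate, and your symmetry argument then produces only one critical point there --- yet at the end you say the needed fact is only the absence of zeros in $(e_1,\xi)\cup(\xi,e_2)$, which would not suffice for your count even if proved.

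The paper sidesteps this question entirely, which is why its proof closes and yours does not. If the critical point $c$ guaranteed in $\mathcal{A}_0$ by Lemma~\ref{cpoint} is real, it must lie in $(0,e_1)$ (Lemma~\ref{criticalpointsC_p} together with $\mathcal{A}_0\cap\mathbb{R}=(-\infty,e_1)$); in the non-super-attracting case $C_p'(0)=\frac{(k-1)(2k-1)}{2k^2}>0$ and $C_p'(e_1)>1$ (Lemma~\ref{extraneous}), so if $c$ is a simple zero of $C_p'$ then $C_p'$ changes sign at $c$ and the Intermediate Value Theorem applied to $C_p'$ on $[0,e_1]$ yields a second zero of $C_p'$ in $(0,c)$ or in $(c,e_1)$, hence a second critical point in $\mathcal{A}_0$; if $c$ is a multiple zero of $C_p'$ it already counts twice. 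This argument is indifferent to whether real critical points in $(0,1)$ exist at all, which is precisely the question your route must settle. To turn your proposal into a proof, either establish the positivity of $F$ on $(0,1)$ for all $k,m\ge 2$ (and for the correct region, namely all of $(0,1)$), or replace the non-reality claim with this IVT step.
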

\begin{proof}
It is already found that $C_p(x)>x$  (Lemma~\ref{C_p-real}(1)) and $C_p'(x)>0$  (Lemma~\ref{criticalpointsC_p}) for all $x< 0$. Since $C_p(0)=0$, the sequence $\{C_p^n(x)\}_{n>0}$ is strictly increasing and bounded above by $0.$ This implies that  $\lim_{n \to \infty}C_p^n(x)=0 $ for all $x<0$.
  Since $C_p(x)<x$ for all $x \in (0,e_1)$ (see Equation (\ref{eq:CpR})), there are two possibilities for the sequence $\{C_p ^n (x)\}_{n>0}$: either it converges to $0$ or there exits $n_0$ such that $C_p ^{n_0} (x) \leq 0$. In either case, $(0, e_1) \subset  \mathcal{A}_0$.  
Therefore,  $$(-\infty,e_1)\subset \mathcal{A}_0.$$ 
	
Similarly, we have $C_p(x)<x$ and $C_p'(x)>0$ for all $x>1$. This implies that for all $x$ in $(1,\infty)$, $\lim_{n \to \infty}C_p^n(x)=1.$
 Using a similar argument as before and the fact that $C_p (x)> x$ for all $x \in (e_2, 1)$, we have   $$(e_2,\infty)\subset \mathcal{A}_1.$$

\par
Let $\mathcal{A}$ be  the immediate basin of either $0$ or of $1$. Then it contains a critical point $c$ by Lemma~\ref{cpoint}. The proof will be complete by showing that   $\mathcal{A}$ contains at least two critical points counting multiplicity. This is because, there are only four critical points available for  the two immediate basins of $C_p$. 
\par If the  fixed point corresponding to $\mathcal{A}$ is super-attracting, then the super-attracting fixed point is itself a critical point with multiplicity at least two (see   Equation~(\ref{eq:Cp2})) and we are done.
\par  Assume that the fixed point corresponding to $\mathcal{A}$ is not super-attracting. If $c$ is non-real,  then   $\overline{c}$  is  also a critical point of $C_p$ and lies in $\mathcal{A}$, by Lemma~\ref{symmetry}. Again we are done. For real  $c$, we give a proof for   $\mathcal{A}=\mathcal{A}_0$. This proof works for  $\mathcal{A}=\mathcal{A}_1$.
\par Note that   $c \in (0,e_1)$ by Lemma~\ref{criticalpointsC_p} and the fact that $\mathcal{A}_0 \cap \mathbb{R}=(-\infty, e_1)$.  Clearly  $ C_p'(0)=\frac{(k-1)(2k-1)}{2 k^2} \in (0,1)$ and $ C_p'(e_1)>1$ (see Lemma~\ref{extraneous}) If $c$ is a simple root of $C_p'$, i.e., $C_p'(c)=0$ and $C_p''(c)\neq 0$ then $C_p''(c)>0$ or $<0$. For $C_p''(c)>0$,  the function $C_p'$ is strictly increasing in an interval containing $c$ and it follows from the Intermediate Value Theorem applied to $C_p' : [0,e_1] \to \mathbb{R}$ that  there  exists a  root of $C_p '$ in $(0,c)$. Similarly, it can be seen that $C_p'$ has a root in $ (c, e_1)$ whenever  $C_p ''(c) <0$. (refer to Figure~\ref{possiblegraphs}). If $c$ is a multiple root of $C_p '$ then its multiplicity must be at least two. Therefore, the immediate basin $\mathcal{A}$ contains at least two critical points whenever it contains a real critical point, i.e., $c$ is real.
 
%
%
\end{proof}
\begin{figure}[h!]
	\begin{subfigure}{.5\textwidth}
		\centering
		\includegraphics[width=1\linewidth]{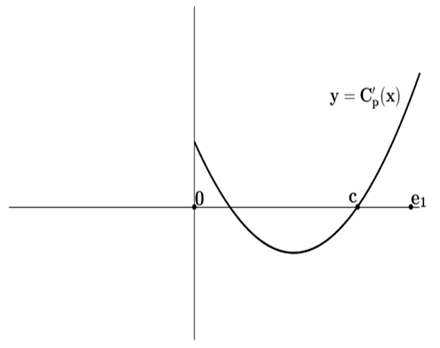}
		\caption{$C_p ''(c)>0$}
	\end{subfigure}
	\begin{subfigure}{.5\textwidth}
		\centering
		\includegraphics[width=1\linewidth]{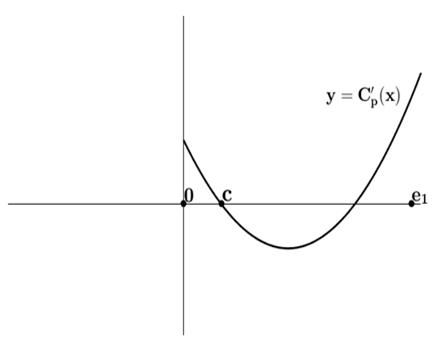}
		\caption{$C_p ''(c)<0$}
	\end{subfigure}
	\caption{The possible graphs of $C_p ': [0,e_1]  \to \mathbb{R} $.}
	\label{possiblegraphs}
\end{figure}
\begin{proof}[Proof of Theorem~\ref{connected-convergent}]
Both $\mathcal{A}_0$ and $\mathcal{A}_1$ are invariant immediate basins of attraction,  and $\infty$ is a repelling fixed point of $C_p$ (its multiplier is $\frac{2(k+m)^2}{2(k+m)^2 -3(k+m)+1}$ by Lemma~\ref{multiplier-fixedpoints}). It follows from Lemma \ref{Jconnected} that both $\mathcal{A}_0$ and $\mathcal{A}_1$ have at least one pole on their respective boundaries. Since $C_p$ has exactly one pole, namely $\frac{k}{k+m}$, this pole in  the intersection $ \partial \mathcal{A}_0 \cap \partial \mathcal{A}_1 $. 
	
	If $\mathcal{A}_0$ is not simply connected, then there exists a Jordan curve $\gamma$ in $\mathcal{A}_0$, surrounding  a bounded Julia component (intersecting $\partial \mathcal{A}_0$). Here, by a \textit{Julia component}, we mean a maximally connected subset of the Julia set. Since $\infty$ is in the Julia set of $C_p$ and the set $\{z: C_p ^n (z)=\infty~\mbox{for some}~n>0\}$ is dense in the Julia set, $C_p^l(\gamma)$ surrounds the pole $\frac{k}{k+m}$ for some $l$. This is however not possible, as  $C_p^l(\gamma) \subset \mathcal{A}_0$ (since $\mathcal{A}_0$ is invariant), $\frac{k}{k+m} \in \partial \mathcal{A}_0 \cap \partial \mathcal{A}_1$  and $\mathcal{A}_1$ is unbounded. Therefore, $\mathcal{A}_0$ must be simply connected. It follows from the second part of Lemma \ref{Jconnected} that the Julia set of $C_p$ is connected.
	\par 
	The pole $\frac{k}{k+m}$ is a critical point  of $C_p$ with multiplicity two and is mapped to $\infty$, which lies in the Julia set. There are four other critical points of $C_p$ and each of the immediate basins $\mathcal{A}_0$ and $\mathcal{A}_1$
 contains exactly two critical points by Lemma~\ref{criticalpoints-immediatebasins}. It now follows from Lemma~\ref{cpoint} that the Fatou set of $C_p$ is the union of these immediate basins and their iterated pre-images. In other words, $C_p$ is convergent. 
\end{proof}
The case $k=1$ is described in the following remark.
\begin{rem}
For $k=1$, $$F(z) =z^2 \left( m(m+1)^2(2m+1)z^2-4m(m+1)(2m+1)z+
9m^2+3m \right).$$ It does not have any  non-zero real root. In fact, $F(x)\geq 0$ for all $x \in \mathbb{R}$. This gives $C_p'(x)>0$   and $C_p$ is strictly increasing in $\mathbb{R} \setminus \{0, \frac{1}{m+1}\}$. The finite critical points of $C_p$ are $0,0,\frac{1}{m+1},\frac{1}{m+1},\frac{1}{m+1}\left(2\pm  i \sqrt{\frac{m-1}{2m+1}}\right)$. The critical point $0$ is itself fixed, the multiple pole $\frac{1}{m+1}$ is mapped to $\infty$, and $\infty$ is fixed.  The fixed points of $C_p$ are  precisely $0,1$ and the roots of the equation $(3m+2)(m+1)z^2-2(3m+2)z+2=0$, which are $\frac{1}{m+1}(1\pm \sqrt{\frac{m}{3m+2}})$ and are the extraneous fixed points of $C_p$. On the real line, graphs of $C_p(x)$ for $m=2,3$ are given in Figure~\ref{Graph-k=1}. The immediate basins $\mathcal{A}_0$ and $\mathcal{A}_1$ for $k=1$ are illustrated in blue and yellow, respectively in Figure~\ref{k=1-dynamics}.
\begin{figure}[h!]
\begin{subfigure}{.5\textwidth}
\centering
\includegraphics[width=1\linewidth]{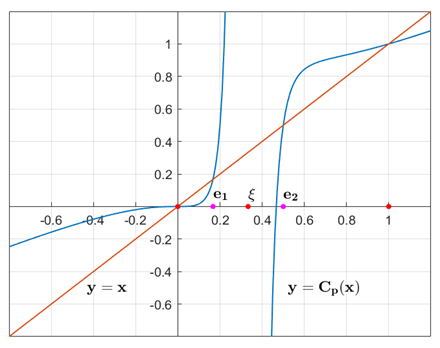}
\caption{$m=2$}
\end{subfigure}
\begin{subfigure}{.5\textwidth}
\centering
\includegraphics[width=1\linewidth]{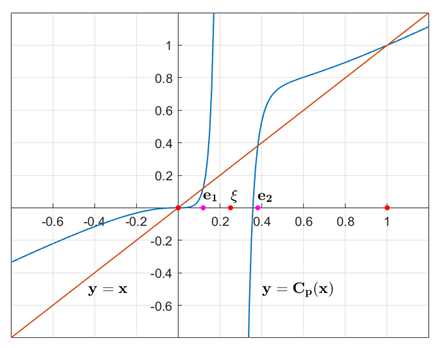}
\caption{$m=3$}
\end{subfigure}
\caption{The graphs of $C_p: \mathbb{R} \to \mathbb{R}$ for $p(z)=z(z-1)^m.$ }
\label{Graph-k=1}
\end{figure}
\begin{figure}[h!]
	\begin{subfigure}{.5\textwidth}
		\centering
		\includegraphics[width=0.975\linewidth]{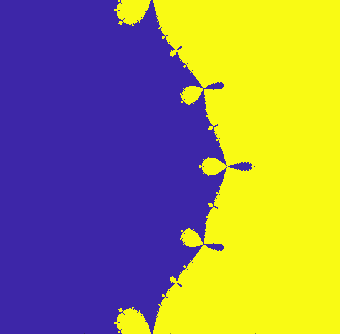}
		\caption{$m=2$}
	\end{subfigure}
	\begin{subfigure}{.5\textwidth}
		\centering
		\includegraphics[width=0.975\linewidth]{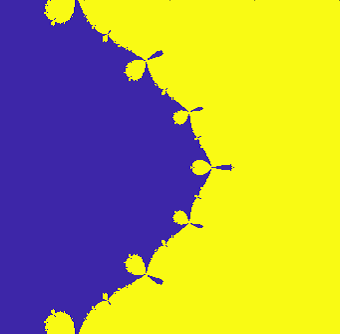}
		\caption{$m=3$}
	\end{subfigure}
	\caption{The Fatou and Julia sets of $C_p$ for $p(z)=z (z-1)^m$.}
	\label{k=1-dynamics}
\end{figure}
\end{rem}
\subsection{Proof of Theorem ~\ref{equal-multiplicity}}
Let $p_m (z)=z^m (z-1)^m$ and $C_m $ denote the Chebyshev's method applied to $p_{m}$.  
Let $L$ be the vertical line passing through $0.5$, the unique pole of $C_m$. Then,  \begin{equation}
	C_m (0.5 +iy)= 0.5 + i\frac{16 (2m -1)(4m -1)y^4 -24 m y^2 -1}{128 m^2 y^3}=0.5+i\phi_{m}(y), say. 
	\label{phi-expression}\end{equation} 

The behaviour of $C_m : L \to L$ can be analysed via  $\phi_m: \mathbb{R} \to \mathbb{R}$ which is defined in Equation~(\ref{phi-expression}). For brevity, let $C_m =C$ and $\phi_{m} =\phi$ as long as the proof of Theorem~\ref{equal-multiplicity} is concerned. We now list some useful properties of $\phi$.

	\begin{figure}[h!]
	\begin{subfigure}{.5\textwidth}
		\centering
		\includegraphics[width=1.0 \linewidth]{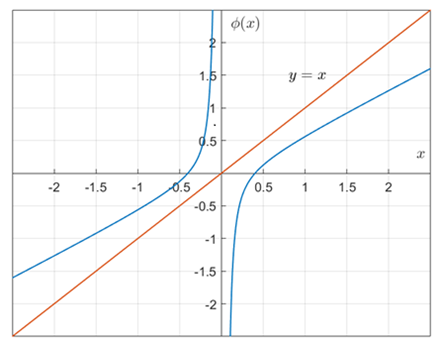}
		\caption{The graph of $\phi: \mathbb{R} \to \mathbb{R}$ for $m=2$.}
	\end{subfigure}
	\begin{subfigure}{.5\textwidth}
		\centering
		\includegraphics[width=1.0 \linewidth]{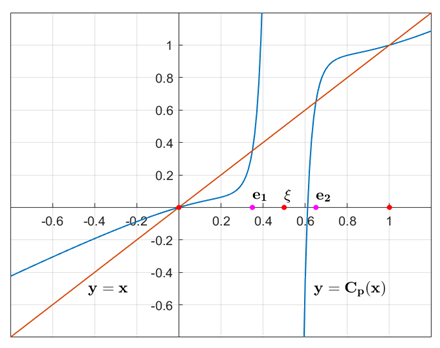}	
		\caption{ The graph of $C_{p_m}: \mathbb{R} \to \mathbb{R}$ for $m=2$.}	\end{subfigure}
	\caption{Graphs of $\phi$ and $C_{p_m}$.}
	\label{graph-phi}
\end{figure}
\begin{lem}
	Let  $m \geq 1$ and $\phi : \mathbb{R} \to \mathbb{R}$ be defined by $\phi (x)=\frac{16 (2m -1)(4m -1)x^4 -24 m x^2 -1}{128 m^2 x^3}$. Then the following statements are true. 
	\begin{enumerate}
		\item The map $\phi	$ is odd,  and $\phi(x)< x$ for all $x>0$.  In particular, it does not have any fixed point.  
		\item The map $\phi	$ is  strictly increasing in $\mathbb{R} \setminus \{0\}$, and   $\lim_{x \to +\infty} \phi(x) = +\infty$ and $\lim_{x \to 0^+} \phi(x) = -\infty$. In particular, $\phi$ has a unique positive zero, say $ \zeta$.
		
		\item For each $x>\zeta$, there is  $n_x \geq 1$ such that $\phi^{n_x}(x) \in (0, \zeta]$. Further, $n_x$ can be chosen as the smallest such number.  
		\item For every real number $x$,   $\phi^n (x) \in (0, \zeta)$ for infinitely many values of $n$ unless $\phi^n (x)= 0$ for some $n \geq 0$.
		
	\end{enumerate}
	\label{phi}
\end{lem}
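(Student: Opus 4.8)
The plan is to treat the four parts in order, leaning throughout on the fact, established in part (1), that $\phi$ has no fixed point. Parts (1) and (2) are essentially computational. For oddness I note that the numerator $16(2m-1)(4m-1)x^4-24mx^2-1$ contains only even powers of $x$ while the denominator $128m^2x^3$ is odd, so $\phi(-x)=-\phi(x)$. To prove $\phi(x)<x$ for $x>0$ I would clear the positive denominator and reduce the inequality to $16(2m-1)(4m-1)x^4-24mx^2-1-128m^2x^4<0$; since $16(2m-1)(4m-1)-128m^2=-16(6m-1)$, the left side equals $-16(6m-1)x^4-24mx^2-1$, which is strictly negative for every real $x$ when $m\ge 1$. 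The absence of fixed points is then immediate from $\phi(x)<x$ on $(0,\infty)$ and, by oddness, $\phi(x)>x$ on $(-\infty,0)$.

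For part (2), I would split $\phi(x)=\frac{(2m-1)(4m-1)}{8m^2}x-\frac{3}{16m}\cdot\frac1x-\frac{1}{128m^2}\cdot\frac1{x^3}$ and differentiate to obtain $\phi'(x)=\frac{(2m-1)(4m-1)}{8m^2}+\frac{3}{16m}\cdot\frac1{x^2}+\frac{3}{128m^2}\cdot\frac1{x^4}$, in which every summand is positive for $m\ge1$; hence $\phi$ is strictly increasing on each of $(-\infty,0)$ and $(0,\infty)$. The limits $\lim_{x\to+\infty}\phi(x)=+\infty$ and $\lim_{x\to0^+}\phi(x)=-\infty$ follow from the leading term and the $x^{-3}$ term respectively, and monotonicity together with the intermediate value theorem yields the unique positive zero $\zeta$. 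For part (3), monotonicity and $\phi(\zeta)=0$ give $\phi(x)>0$ for $x>\zeta$, while part (1) gives $\phi(x)<x$; thus the orbit $\{\phi^n(x)\}$ strictly decreases as long as it stays in $(\zeta,\infty)$. Were it to remain there forever it would be bounded below by $\zeta$, hence convergent to some $\ell\ge\zeta>0$ with $\phi(\ell)=\ell$ by continuity, contradicting part (1). So some iterate lands in $(0,\zeta]$, and I take $n_x$ to be the least such index.

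Part (4) is the heart of the matter and the step I expect to be hardest, since it requires controlling the full real dynamics rather than a monotone excursion. Using monotonicity, $\phi(\zeta)=0$, and oddness, I would first record the image intervals: $\phi$ carries $(0,\zeta)$ onto $(-\infty,0)$, $(\zeta,\infty)$ into $(0,\infty)$, $(-\zeta,0)$ onto $(0,\infty)$, and $(-\infty,-\zeta)$ into $(-\infty,0)$. Reading these off, a sign change of the orbit can occur only when the current iterate lies in one of the ``near'' intervals $(0,\zeta)$ or $(-\zeta,0)$; in particular a passage from a positive iterate to a negative one happens \emph{exactly} when the positive iterate lies in $(0,\zeta)$. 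Assuming the orbit never meets $0$, I would then show it cannot eventually keep a constant sign. If, say, all iterates past some stage $N$ were positive, then for $n\ge N$ no iterate could lie in $(0,\zeta)$ (such a point maps below $0$) nor equal $\zeta$ (that maps to $0$), forcing every late iterate into $(\zeta,\infty)$; but part (3) forbids the orbit from staying in $(\zeta,\infty)$ forever. The all-negative case is symmetric via oddness and the mirror of part (3) on $(-\infty,-\zeta)$.

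Consequently the orbit changes sign infinitely often. Since the sign pattern alternates in direction, there are infinitely many positive-to-negative crossings, and these are precisely the visits to $(0,\zeta)$; this gives the desired conclusion. The delicate point to write carefully is the bookkeeping that each excursion into a far interval $(\zeta,\infty)$ or $(-\infty,-\zeta)$ is merely transient (by part (3) and its reflection), so that the only mechanism for realising infinitely many sign changes is through infinitely many genuine visits to the near interval $(0,\zeta)$, with the exceptional orbits being exactly those that reach $\pm\zeta$ and hence hit $0$.
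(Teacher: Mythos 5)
Your proposal is correct. Parts (1)--(3) follow the paper's proof essentially verbatim: the same decomposition $\phi(x)=\frac{(2m-1)(4m-1)}{8m^2}x-\frac{3}{16m}\frac{1}{x}-\frac{1}{128m^2}\frac{1}{x^3}$, the same positivity of every term of $\phi'$, and the same monotone-orbit contradiction in (3) (a decreasing orbit trapped in $(\zeta,\infty)$ would converge to a fixed point, impossible by (1)). Where you genuinely diverge is part (4). The paper argues constructively: it first shows that every point of $I_\zeta=(0,\zeta)$ returns to $I_\zeta$ after finitely many steps, chasing the orbit through $(-\infty,-\zeta)$ or $(-\zeta,0)$ via (3) and oddness and keeping track of explicit return indices ($m_x\geq 2$, $n_{x_1}$, $N_x$), and then shows separately that every point outside $I_\zeta$ eventually enters it. You instead run a contradiction on the sign pattern: from the image intervals of the four monotone pieces you observe that a positive-to-negative sign change occurs exactly at a visit to $(0,\zeta)$, you use (3) and its odd reflection to rule out an orbit (avoiding $0$) being eventually of constant sign, and you conclude there are infinitely many positive-to-negative crossings, hence infinitely many visits. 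The two arguments rest on identical dynamical input --- part (3) plus oddness --- but your bookkeeping is lighter: no explicit return times, just crossing counting, at the small cost of the (correctly handled) observation that infinitely many sign changes force infinitely many crossings in the positive-to-negative direction because crossings alternate. Either write-up proves the lemma.
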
 
\begin{proof} Note that $\phi(x)=\frac{ (2m -1)(4m -1)}{8m^2}x-\frac{3}{16m }\frac{1}{x} -\frac{1}{128m^2 }\frac{1}{x^3}.$
\begin{enumerate}
\item 
		It is clear from the expression of $\phi$ that it is odd and $\phi(x)<x$ for all $x>0$. Since $\phi$ is odd, we also have $\phi(x) > x$ for all $x<0$. In particular, $\phi$ does not have any fixed point.
\item Since $\phi '(x)>0$ for all $x \neq 0$, $\phi$ is strictly increasing in $\mathbb{R} \setminus \{0\}$. It follows from the expression of $\phi$ that  $\lim_{x \to +\infty} \phi(x) = +\infty$ and $\lim_{x \to 0^+} \phi(x) = -\infty$. Clearly, $\phi$ has a unique positive zero, say $\zeta$.
\item Let $x> \zeta$. Then    $0<\phi (x)<x$. If
		$\phi (x)   \in (0,\zeta]$, then we are done. Otherwise,  $\zeta < \phi^2 (x)< \phi(x)$. The last inequality $\phi^2 (x)< \phi(x)$ results from the strict increasingness of $\phi$.  Repeating this argument for $\phi^2 (x)$, it is seen that either $\phi^2 (x) \in (0, \zeta]$ or  $\zeta < \phi^3 (x)< \phi^2(x) < \phi(x)$. This process can not be continued indefinitely because then $\{\phi^n (x)\}_{n>0}$ would become a strictly decreasing sequence that is bounded below by $\zeta$ making it convergent and the limit point has to be a fixed point of $\phi$ in $[\zeta, x)$.  However, this is not possible by (1) of this lemma. Therefore, for each $x>\zeta$,   there is  $n_x \geq 1$ such that $\phi^{n_x}(x) \in (0, \zeta]$.
		\item 
		Let there be a real $x$ such that $\phi^n (x) \neq 0$ for any $n \geq 0$. Then  $\phi^n (x) \neq \zeta~\mbox{or}~-\zeta$ for any $n \geq 0$. Denote $(0, \zeta)$ by $I_\zeta$.  First we show that  $\phi^n (x) \in I_\zeta$ for infinitely many values of $n$ for each $x \in I_\zeta$. The  proof will then be complete by showing that for each real $x \notin I_\zeta$, $\phi^m (x) \in I_\zeta$ for some $m$.

\par  If $x \in I_\zeta$ then $\phi(x)<0$ but  $\phi(x) \neq -\zeta$. For $\phi(x) < -\zeta$, there is $n_{\phi(x)}$ such that $\phi^{n_{\phi(x)}} (\phi(x)) \in (-\zeta,0)$. This follows from (3) of this lemma and the fact that $\phi$ is an odd function.   If $\phi(x)$ itself  is in $(-\zeta, 0)$ then $\phi^{2}(x)>0$. In any case, there is $m_x \geq 2$ such that 
$\phi^{m_{x}} (x)>0$ whenever $x \in (0,\zeta)$ (see Figure~\ref{graph-phi}(a)). Denoting $\phi^{m_{x}} (x)$ by $x_1$,  it follows from (3) of this lemma again that $\phi^{n_{x_1}}(x_1) \in (0, \zeta]$ for some $n_{x_1} \geq 1$    whenever $x_1 >\zeta$. For $x_1 \in (0, \zeta)$,  we take $n_{x_1}=0$. Thus, for every $x \in I_\zeta$, there is $N_x \geq 2$ such that $\phi^{N_x}(x) \in I_\zeta$. 
		
\par To prove that for each real $x \notin I_\zeta$, $\phi^n (x) \in I_\zeta$ for some $n$, first note that this is true for each $x> \zeta$ by (3) of this lemma. If $x< 0$ then it is already seen in the previous paragraph that $\phi^{n}(x)>0$ for some $n\geq 1$. We are done since now $\phi^{n}(x)>\zeta$ or is in $I_\zeta$.\end{enumerate}
\end{proof}
For the proof of Theorem~\ref{equal-multiplicity}, we need to understand the common boundary of the two immediate basins.  
 Let $\gamma$ denote the common boundary of the two immediate basins $\mathcal{A}_0$ and $ \mathcal{A}_1$ of $C$ corresponding to $0$ and $1$, respectively. Since the boundary of each (invariant) immediate basin is itself forward invariant under $C$,  $\gamma$ is also forward invariant. However, since $\gamma$ is properly contained in the Julia set, it can not be backward invariant. However,   it is true in a restricted sense as described below.
 \begin{lem}
 	If $\gamma$ is the common boundary  $\partial \mathcal{A}_0 \cap \partial \mathcal{A}_1$ and $w \in \gamma$ then there are   two distinct points $z_{-1}, z_{-2}$ in $\gamma$ such that $C(z_{-i})=w$ for $i=1,2$.
 \label{common-boundary-backwardinvariant}
 \end{lem}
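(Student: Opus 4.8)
The plan is to exhibit the two preimages explicitly on the vertical line $L$ through the pole $\tfrac{1}{2}$ and then to use a reflection symmetry of $C$ across $L$ to force them onto the common boundary $\gamma$. First I would record the symmetry. Since $p_m\circ T=p_m$ for $T(z)=1-z$, the Scaling property (Lemma~\ref{scaling}) gives $C\circ T=T\circ C$, i.e. $C(1-z)=1-C(z)$; combined with $C(\bar z)=\overline{C(z)}$ (Lemma~\ref{symmetry}), the reflection $\sigma(z)=1-\bar z$ across $L$ satisfies $C\circ\sigma=\sigma\circ C$. As $\sigma(0)=1$, the map $\sigma$ interchanges $\mathcal{A}_0$ and $\mathcal{A}_1$ and fixes $L$ pointwise. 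Next I would show $L\subseteq\mathcal{J}(C)$: since $C(L)\subseteq L$ by Equation~(\ref{phi-expression}) and $L\cup\{\infty\}$ is closed, no orbit starting on $L$ can converge to $0$ or $1$, so by convergence of $C$ (Theorem~\ref{connected-convergent}) every point of $L$ lies in $\mathcal{J}(C)$. Being connected and disjoint from $L$, and containing $0$ and $1$ respectively, $\mathcal{A}_0$ and $\mathcal{A}_1$ then lie in the open half-planes $D_-=\{\Re z<\tfrac{1}{2}\}$ and $D_+=\{\Re z>\tfrac{1}{2}\}$; consequently $\gamma\subseteq L\cup\{\infty\}$.

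Now fix $w\in\gamma$. If $w=\infty$, then the pole $\tfrac{1}{2}$ and the point $\infty$ are two distinct members of $\gamma$ with $C(\tfrac{1}{2})=C(\infty)=\infty$ (the pole lies in $\gamma$ by the proof of Theorem~\ref{connected-convergent}, and $\infty\in\gamma$ as both basins are unbounded), and we are done. So assume $w=\tfrac{1}{2}+iy_0$ is finite. By Lemma~\ref{phi}, $\phi$ restricts to a strictly increasing bijection of each of $(0,\infty)$ and $(-\infty,0)$ onto $\mathbb{R}$; hence there are a unique $y_+>0$ and a unique $y_-<0$ with $\phi(y_\pm)=y_0$. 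These produce exactly two preimages of $w$ on $L$, namely $z_{-1}=\tfrac{1}{2}+iy_+$ and $z_{-2}=\tfrac{1}{2}+iy_-$, and they are distinct.

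It remains to prove $z_{-1},z_{-2}\in\gamma$, which is the crux. Differentiating $C(\tfrac{1}{2}+iy)=\tfrac{1}{2}+i\phi(y)$ gives $C'(\tfrac{1}{2}+iy)=\phi'(y)$, so by Lemma~\ref{phi} we have $C'(z_{-1})=\phi'(y_+)>0$, a positive real (in particular $z_{-1}$ is not critical). Hence $C$ is a conformal homeomorphism near $z_{-1}$ whose derivative is a positive real, so it maps the side $\{\Re z<\tfrac{1}{2}\}$ of $z_{-1}$ into the side $\{\Re<\tfrac{1}{2}\}$ of $w$. Since $\mathcal{A}_0\subseteq D_-$ accumulates at $w\in\partial\mathcal{A}_0$ from that side, pulling back a sequence $w_n\in\mathcal{A}_0$ with $w_n\to w$ by the local inverse yields $z_n\to z_{-1}$ with $z_n\in D_-$ and $C(z_n)=w_n\in\mathcal{A}_0$. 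Because $C:\mathcal{A}_0\to\mathcal{A}_0$ is proper of degree three while $\deg C=4$, the full preimage is $C^{-1}(\mathcal{A}_0)=\mathcal{A}_0\sqcup U_0$ with $C|_{U_0}$ univalent, and $U_0\subseteq D_+$ (it meets the real axis just to the right of $\tfrac{1}{2}$, where $C<0$ by Lemma~\ref{C_p-real}); therefore $z_n\in\mathcal{A}_0$, and so $z_{-1}\in\partial\mathcal{A}_0$. Applying $\sigma$, which fixes $z_{-1}$ and sends $\partial\mathcal{A}_0$ to $\partial\mathcal{A}_1$, gives $z_{-1}\in\partial\mathcal{A}_1$ as well, so $z_{-1}\in\gamma$; the identical argument gives $z_{-2}\in\gamma$.

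The main obstacle is precisely this last step: a point of $L$ need not lie on either basin boundary (small satellite components such as $U_0,U_1$ can intervene), so one must certify that the two specific line-preimages are approached by $\mathcal{A}_0$, and not merely by $U_0$. The positivity of $C'$ along $L$ is what resolves this, by pinning down which side of $L$ is preserved under the local inverse; the reflection symmetry $\sigma$ then transfers boundary membership from $\mathcal{A}_0$ to $\mathcal{A}_1$ for free. A secondary point requiring care is verifying $U_0\subseteq D_+$ (equivalently $U_1\subseteq D_-$), which guarantees that the pulled-back points land in $\mathcal{A}_0$ rather than in the wrong preimage component.
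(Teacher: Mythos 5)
Your proof is correct, but it takes a genuinely different route from the paper's. The paper's argument is a global pigeonhole count: since $C:\mathcal{A}_0\to\mathcal{A}_0$ is proper of degree three, $w$ has three preimages (with multiplicity) on $\partial\mathcal{A}_0$, all with real part $\leq \tfrac12$, and likewise three on $\partial\mathcal{A}_1$ with real part $\geq \tfrac12$; as $\deg C=4$, at least two preimages must be shared, and shared preimages have real part exactly $\tfrac12$, hence are the two $L$-preimages $z_{-1},z_{-2}$ produced by Lemma~\ref{phi}(2). You start from the same first step (the two $L$-preimages via $\phi$), but then certify membership in $\gamma$ \emph{locally}: positivity of $C'=\phi'$ along $L$ pins down side-preservation of the local inverse, the decomposition $C^{-1}(\mathcal{A}_0)=\mathcal{A}_0\sqcup U_0$ with $U_0$ in the right half-plane rules out the wrong preimage component, and the reflection $\sigma(z)=1-\bar z$ (available precisely because $k=m$) transfers $z_{-i}\in\partial\mathcal{A}_0$ to $z_{-i}\in\partial\mathcal{A}_1$. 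What each buys: the paper's count is much shorter, though its claim of ``three points on the boundary'' quietly requires a limiting (Hurwitz-type) multiplicity argument that your approach avoids; your proof is longer but each step is elementary, it handles $w=\infty$ (which the paper's $\phi$-based step tacitly excludes), and it yields extra structural information --- namely $U_0\subseteq\{\Re z>\tfrac12\}$, which bears on Question 2 at the end of the paper. One point you should make explicit: confining $\mathcal{A}_0$, $\mathcal{A}_1$ and $U_0$ to half-planes requires disjointness from $L\cup\{\infty\}$, not just from $L$, since $\widehat{\mathbb{C}}\setminus L$ is connected through $\infty$; this is immediate because $\infty$ is a repelling fixed point and so lies in $\mathcal{J}(C)$, a fact you already use elsewhere.
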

 \begin{proof}
 	 Let $w \in \gamma$. Then it follows from Lemma~\ref{phi}(2) that for every point $w \in L$, there are exactly two points $z_{-1}, z_{-2} \in L$
 	such that $C (z_{-i})=w$ for $i=1,2$.  There are three points on the boundary of $\mathcal{A}_0$ which are mapped to $w$ by $C$. This is because $C: \mathcal{A}_0 \to \mathcal{A}_0$ is a proper map with degree three. None of these three points can be with real part strictly greater than $0.5$ as $L$ separates $\mathcal{A}_0$ from  $\mathcal{A}_1$. In other words, the real part of each such point is less than or equal to $0.5$. Arguing similarly for $\mathcal{A}_1$, it is seen that the three points on the boundary of $\mathcal{A}_1$ that are mapped to $w$ by $C$  have their real parts greater than or equal to $0.5$. Since there are four pre-images of any point under $C$, the points  $z_{-1}, z_{-2} \in L$ must therefore lie on the common boundary  $\gamma$.
 \end{proof}
%

\begin{proof}[Proof of Theorem~\ref{equal-multiplicity}]Recall that $L$ is the vertical line passing through the pole $0.5$.	
	Since the Fatou set of $C$ is the union of the two attracting basins and $C (L) \subset L$, this line $L$ is contained in the Julia set of $C$, separating the two immediate basins.
	In other words, the common boundary $\gamma  = \partial \mathcal{A}_0 \cap    \partial \mathcal{A}_1$  is a subset of $L$. Next, we  show that $L \subseteq \gamma$.
	\par 
	
Let  $w \in L$ and  $J \subset L$ be an open interval containing $w$. Since the map $C : L \to L$ is conformally conjugate to $\phi: \mathbb{R} \to \mathbb{R}$, it follows from Lemma~\ref{phi}(4) that   $C ^{n_k} (w) \in J_\zeta=(0.5, 0.5+i \zeta)$ for infinitely many values of $k$, unless $C^n (w)=0.5$ for some $n$.
We now show that the first possibility can not be true.

\par   Suppose that $C ^{n_k} (J)$ is strictly contained in $J_\zeta$ for all $k$. Note that $n_k \to \infty$ as $k \to \infty$, and we assume without loss of   generality that it is a strictly increasing sequence. Consider a disk $D$ containing $J_\zeta$ but not containing any point of the post-critical set of $C $ other than the pole $0.5$. This is possible as all the four critical points of $C $ other than the pole are in the Fatou set. The pole is neither a critical value nor in the forward orbit of any critical point of $C$. Therefore, all the branches of  $C ^{-n_k}$ are well-defined on $D$ for each $k$. Let $f_k$ be the branch of $C ^{-n_k}$ such that $f_k ( C ^{n_k} (J) )=J$. By the Monodromy theorem, each $f_k$ is analytically continued throughout $D$. 
	By Lemma~\ref{backward-contraction}, the family $\{f_k\}_{k>0}$ is normal on $D$ and each of its limit function is constant. Therefore, the diameter of $f_k (D)$ tends to $0$ as $k \to \infty$. However, each $f_k(D)$ contains $J$, which has positive diameter leading to a contradiction. Therefore,  there exists an $n'$ and $ z \in J$ such that $C^{n'}(z)=0.5$.
\par Note that the pole $0.5$ is on the common boundary  $\gamma$. Its two pre-images lying on $L$ are in $\gamma$ by Lemma~\ref{common-boundary-backwardinvariant}.
Applying the same lemma  to each of these pre-images, it is found that the four points whose $C^2$-image is $0.5$ are also in $\gamma$. Repeating this argument, we get that  $ z \in \gamma$. The set $\gamma$ is closed as it is the intersection of two closed sets, namely $\partial \mathcal{A}_0$ and $\partial \mathcal{A}_1$. Therefore, the point $w$, being a limit point of $\gamma$, belongs to $\gamma$. In other words, the vertical line $L$ is contained in the common boundary  $\gamma$.

\par The proof competes as $L$ is  connected.

\end{proof}
\begin{rem}
	The Julia set of $C$ is not a Jordan curve. In particular, $\mathcal{J}(C) \setminus L$ is non-empty. In fact, this set  has infinitely many  maximally connected sets.
\end{rem}

We conclude  with	
 following observations that can be made from Figure~\ref{k=m-dynamics}.
\begin{enumerate}
	
	\item   $\mathcal{J}(C_m) \to L$ as $m \to \infty$ with respect to the Hausdorff distance.
	\item For each $m,$ the components of $\mathcal{J}(C_m) \setminus L$ with the largest diameter are the ones containing the extraneous fixed points. These components intersect  the real axis. 
	
\end{enumerate}

For $k \neq m$, the pole of the Chebyshev's method applied to $z^k (1-z)^m$  is $\frac{k}{k+m}$. This pole lies on the common boundary of the two immediate basins (by Lemma~\ref{Jconnected}). 
There are some natural questions on the structure of the basins.

\begin{enumerate}
\item Is the  common boundary of the two immediate basins  connected? This appears to be the case  from the images given in Figure~(\ref{Julia_Fatou2}) for $(k,m)=(6,4)$ and $(k,m)=(3,10)$. A possible approach to prove this could involve establishing a quasi-conformal conjugacy between $C_{z^k (z-1)^m}, k \neq m$ and $C_m$. It is of course clear that $C_{z^k (z-1)^m}, k \neq m$ and $C_m$ are not conformally conjugate as they have fixed points with different multipliers.
	\item For $m>k$, the immediate basin of $1$ seems to contain the vertical line passing through the pole but not the pole itself, and the half-plane lying right to it. Is it actually true?
	\item  What happens to the immediate basin of $0$ when $k$ is fixed and $m \to \infty$?
	\item The Chebyshev's method applied to any polynomial with two distinct roots is a quartic rational map. Indeed,  every Chebyshev's method with degree four arises from a polynomial with two distinct roots. Such rational maps have two attracting  and three repelling fixed points.  It might be interesting to find a minimal set of assumptions under which a quartic rational map is conjugate (or equal) to the Chebyshev's method applied to a polynomial with two distinct roots.
\end{enumerate}
A proof or a counterexample to any of these would be interesting to know.

 \section{Declarations}
 \subsection{Author Contribution} Both authors contributed equally.
 \subsection{Funding} Pooja Phogat is funded by a Senior Research Fellowship (Grant number: 09/1059(0031)/2020-EMR-I) provided by the Council of Scientific and Industrial Research, Govt. of India. Funding is not applicable to the other two authors.
 \subsection{Conflicts of interest/Competing interests}
 Not Applicable.
 \subsection{Data Availability statement} Data sharing not applicable to this article as no datasets were generated or analyzed during the current study.
 \subsection{Code availability} Not Applicable.

\end{document}